\theoremstyle{plain}
\title{Non-recurrence sets for weakly mixing linear dynamical systems}
\author{Sophie Grivaux}
\address{CNRS,
Laboratoire Paul Painlev\' e, UMR 8524, Universit\'e  Lille 1, Cit\' e Scientifique, 59655 Villeneuve d'Ascq
Cedex, France}
\email{grivaux@math.univ-lille1.fr}
\subjclass{37B20, 37A25, 47A16, 47A35}
\keywords{Recurrence and non-recurrence sets, weakly mixing dynamical systems, linear dynamical systems, Jamison sequences, Gaussian measures}
\thanks{The author was partially supported
by ANR-Projet Blanc DYNOP}
\def\T{\ensuremath{\mathbb T}}
\def\R{\ensuremath{\mathbb R}}
\def\Z{\ensuremath{\mathbb Z}}
\def\C{\ensuremath{\mathbb C}}
\def\N{\ensuremath{\mathbb N}}
\DeclareMathOperator{\supp}{supp}
\newcommand{\sep}{separable}
\newcommand{\js}{Jamison sequence}
\newcommand{\ops}{operators}
\newcommand{\op}{operator}
\newcommand{\erg}{ergodic}
\newcommand{\eve}{eigenvector}
\newcommand{\eva}{eigenvalue}
\newcommand{\wrt}{with respect to}
\newcommand{\bs}{backward shift}
\newcommand{\proba}{probability}
\newcommand{\ga}{Gaussian}
\newcommand{\inv}{invariant}
\newcommand{\prob}{probability}
\newcommand{\mea}{measure}
\newcommand{\nd}{non-degenerate}
\newcommand{\mpt}{measure-preserving transformation}
\newcommand{\wmx}{weakly mixing}
\newcommand{\rg}{rigid}
\newcommand{\rs}{rigidity sequence}
\newcommand{\ds}{dynamical system}
\newcommand{\lds}{linear dynamical system}
\newcommand{\rec}{recurrence}
\newcommand{\nrec}{non-recurrence}
\newcommand{\rect}{recurrent}
\newcommand{\nrect}{non-recurrent}
\newcommand{\ppb}{partially power-bounded}
\newcommand{\pol}{polynomial}
\newcommand{\pss}[2]{\ensuremath{{\langle #1,#2\rangle}}}
\newtheorem{theorem}{Theorem}[section]
\newtheorem{lemma}[theorem]{Lemma}
\newtheorem{proposition}[theorem]{Proposition}
\newtheorem{corollary}[theorem]{Corollary}
\theoremstyle{definition}}
\theoremstyle{definition}\newtheorem{example}[theorem]{Example}}
\newtheorem{fact}[theorem]{Fact}
\theoremstyle{definition}\newtheorem{definition}[theorem]{Definition}}
\theoremstyle{definition}}
\theoremstyle{definition}\newtheorem{remark}[theorem]{Remark}}
\newtheorem{question}[theorem]{Question}
\theoremstyle{definition}\newtheorem*{FFC Criterion}{Frequent
Faber-hypercyclicity Criterion}}
\newtheorem*{Hypercyclicity Criterion}{Hypercyclicity Criterion}
{\theoremstyle{definition}\newtheorem*{GS Criterion}{Godefroy-Shapiro
Criterion}}
\def\piednote#1{\let\oldfn=\thefootnote\def\thefootnote{}\footnote{\noindent#1}%
\addtocounter{footnote}{-1}\def\thefootnote{\oldfn}}
\begin{document}

\begin{abstract}
We study non-recurrence sets for weakly mixing dynamical systems by using linear dynamical systems. These are systems consisting of a bounded linear operator acting on a separable complex Banach space $X$, which becomes a probability space when endowed with a non-degenerate Gaussian measure. We generalize some recent results of Bergelson, del Junco, Lema\'nczyk and Rosenblatt, and show in particular that sets $\{n_{k}\}$ such that $\frac{n_{k+1}}{n_{k}}\to +\infty$, or such that $n_{k}$ divides $n_{k+1}$ for each $k\ge 0$, are non-recurrence sets for weakly mixing linear dynamical systems. We also give examples, for each $r\ge 1$, of $r$-Bohr sets which are non-recurrence sets for some weakly mixing systems.
\end{abstract}
\maketitle

\section{Introduction}

The main topic of this paper is the study of recurrence and non-recurrence in the measure-theoretic framework, and for a particular class of dynamical systems, namely \wmx\ \ds s. Let $(n_{k})_{k\ge 0}$ be a strictly increasing sequence of positive integers. The set $\{n_{k} \textrm{ ; } k \ge 0\}$ is a \emph{\rec\ set}, or a \emph{Poincar\'e set}, if for any \ds\ $(X, \mathcal{B},m,T)$, where $T$ is a \mpt\ of a non-atomic probability space $(X,\mathcal{B},m)$, the following is true: for any $A\in \mathcal{B}$ with $m(A)>0$ there exists a $k\ge 0$ such that $m(T^{-n_{k}}A\cap A)>0$.
\par\smallskip
Recurrence is a central topic in \erg\ theory, and we refer the reader to one of the works \cite{Fu}, \cite{Fu2} or \cite{GW} and to the references therein for more information about it, as well as for applications to number theory and combinatorics. Standard examples of \rec\ sets are the set $\N$ of all integers (this is the classical Poincar\'e \rec\ theorem), the set of squares $\{k^{2}\textrm{ ; }k\ge 0\}$, and more generally any set $\{p(k)  ; k\ge 0\}$ where $p$ is a \pol\ taking integer values on integers such that $p(0)=0$. Also, any thick set, i.e. containing arbitrarily long blocks of integers, is a \rec\ set, as well as any set of the form $(E-E)\cap\N$, where $E$ is an infinite subset of $\N$. On the other hand, it is not difficult to exhibit \nrec\ sets: the set of odd integers is the easiest example, and rotations on the unit circle $\T=\{\lambda \in\C\textrm{ ; } |\lambda |=1\}$ provide a wealth of \nrec\ sets: if $\{n_{k}\}$ is any set for which there exists a $\lambda \in\T$ and a $\delta >0$ such that $|\lambda ^{n_{k}}-1|\ge \delta $ for all $k$, then $\{n_{k}\}$ is clearly not a \rec\ set.
\par\smallskip
In the rest of the paper, we will say that $\{n_{k}\}$ is a \emph{\nrec\ set for the \ds }\ $(X,\mathcal{B},m,T)$, or simply that
$\{n_{k}\}$ is \emph{\nrect\ for the \ds }\ $(X,\mathcal{B},m,T)$,
 if there exists a set $A\in \mathcal{B}$ with $m(A)>0$ such that $m(T^{-n_{k}}A\cap A)=0$ for each $k\ge 0$. Given a \nrec\ set $\{n_{k}\}$, it is interesting to try to construct  $(X,\mathcal{B},m,T)$ as above with additional properties: can we construct $T$ \erg? \wmx? The study of \nrec\ sets for \wmx\ \ds s was initiated by Bergelson, Del Junco, Lema\'nczyk and Rosenblatt in the recent paper \cite{BDLR}, where they give several examples of \wmx\ \nrect\ systems and study the relationship between \nrec\ and \rg ity. They prove here in particular that the generic transformation is both \wmx, \rg\ and \nrect, and that if the sequence $(n_{k})_{k\ge 0}$ grows sufficiently fast, namely if it satisfies the condition $$\sum_{k\ge 0}\frac{n_{k}}{n_{k+1}}<+\infty ,$$ then $\{n_{k}\}$ is a \nrec\ set for some \wmx\ \ds.
\par\smallskip
Our aim here is to continue this study of \nrec\ sets for \wmx\ systems. We first provide new examples of such sets by using a rather particular class of \ds s, namely linear \ds s. A \emph{\lds}\ consists of a bounded linear \op\ $T$ on a \sep\ complex infinite-dimensional Banach space $X$, and under certain conditions concerning, usually, the \eve s of $T$ associated to \eva s of modulus $1$, it is possible to construct a \prob\ \mea\ on $X$ \wrt\ which $T$ becomes a \mea-preserving \wmx\ transformation. More details about this will be given in the next section. We will also need to use some results about \emph{non-Jamison sequences}, which form a class of sequences connected to the study of partial power-boundedness of operators on \sep\ spaces, and which appear naturally as well in the study of \rg ity  sequences (see \cite{EG}). 
\par\smallskip
Here is our first result, which generalizes one of the  results of \cite{BDLR} mentioned above: 

\begin{theorem}\label{th1}
If $(n_{k})_{k\ge 0}$ is a sequence of integers such that
$\frac{n_{k+1}}{n_{k}}$ tends to infinity, the set $\{n_{k}\}$ is a \nrec\ set for some \wmx\ \lds. More generally, the same result holds if $(n_{k})_{k\ge 0}$ is a non-\js\ for which there exists a $\lambda _{0}\in\T$ such that $\inf_{k\ge 0}|\lambda _{0}^{n_{k}}-1|>0$ (i.e. if $\{n_{k}\}$ is a \nrec\ set for some rotation on the unit circle).
\end{theorem}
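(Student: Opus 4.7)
The plan is to combine the spectral theory of non-\js s with the Gaussian-measure construction of \wmx\ linear dynamical systems (as recalled in \S 2, in the spirit of Bayart--Grivaux).

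\emph{Spectral setup and construction of the LDS.} I would first fix $\delta>0$ with $|\lambda_{0}^{n_{k}}-1|\geq 4\delta$ for every $k$. Using that $(n_{k})$ is a non-\js\ (which in particular provides a continuous Borel probability measure on $\T$ along which $(n_{k})$ acts rigidly), and after an Egorov-type extraction, one obtains a continuous Borel probability measure $\mu$ on $\T$ and a sequence $\epsilon_{k}\downarrow 0$ (as fast as one wishes) with $|\nu^{n_{k}}-1|\leq\epsilon_{k}$ for every $\nu\in\supp(\mu)$ and every $k$. Translating by $\lambda_{0}$ yields a continuous probability measure $\sigma:=(R_{\lambda_{0}})_{*}\mu$ on $\T$, supported in $\{\lambda\in\T:|\lambda^{n_{k}}-1|\geq 3\delta\text{ for every }k\}$ via the reverse triangle inequality. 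The linear-dynamics machinery of \S 2 then furnishes a bounded operator $T$ on a separable complex Hilbert space $X$ with a perfectly spanning measurable family of unimodular eigenvectors $\{\xi_{\lambda}\}_{\lambda\in\supp(\sigma)}$ ($T\xi_{\lambda}=\lambda\xi_{\lambda}$), and the associated non-degenerate Gaussian measure $m$ on $X$ turns $T$ into a \wmx\ measure-preserving transformation---the continuity of $\sigma$ ruling out non-constant eigenfunctions of the Koopman operator.

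\emph{The non-recurrent set.} Pick $\phi\in X^{*}$ whose spectral density $\widehat\phi(\lambda):=\phi(\xi_{\lambda})$ has unit norm in $L^{2}(\sigma)$, and put $Z(x):=\phi(x)$; under $m$, $Z$ is a centered unit-variance complex Gaussian. Representing $x$ as a stochastic integral against the white noise on $(\T,\sigma)$ yields the decomposition
\[Z(T^{n_{k}}x)=\lambda_{0}^{n_{k}}Z(x)+E_{k}(x),\]
with $E_{k}$ a centered complex Gaussian of variance at most $\epsilon_{k}^{2}$. Fix parameters $\alpha,c,\eta>0$ with $2\alpha+2\eta/c<4\delta$ (e.g.\ $\alpha=\delta/2$, $\eta=\delta c/2$), and set
\[A:=\bigl\{x\in X:|\arg Z(x)|<\alpha,\ |Z(x)|>c\bigr\}\;\setminus\;\bigcup_{k\geq 0}\bigl\{x\in X:|E_{k}(x)|>\eta\bigr\}.\]
With $\epsilon_{k}$ decaying fast enough, Gaussian tail bounds make $\sum_{k}m\bigl(\{|E_{k}|>\eta\}\bigr)$ arbitrarily small, so $m(A)>0$. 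For $x\in A\cap T^{-n_{k}}A$ the conditions $|Z(x)|>c$, $|E_{k}(x)|\leq\eta$ and $|\arg Z(x)|,|\arg Z(T^{n_{k}}x)|<\alpha$ yield, by an elementary planar-angle estimate, $|\arg\lambda_{0}^{n_{k}}|<2\alpha+2\eta/c<4\delta$, contradicting $|\lambda_{0}^{n_{k}}-1|\geq 4\delta$. Hence $m(T^{-n_{k}}A\cap A)=0$ for every $k\geq 0$.

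\emph{Main obstacle.} The crucial and delicate point is the initial spectral extraction: from the non-\js\ hypothesis one must produce a continuous measure $\mu$ together with a \emph{uniform} and fast-decaying bound $|\nu^{n_{k}}-1|\leq\epsilon_{k}$ on $\supp(\mu)$. A plain non-\js\ yields only an $L^{2}$-averaged rigidity; upgrading this to uniform control on a support of positive $\mu$-measure (while keeping $\mu$ continuous) requires an Egorov/Lusin-type extraction backed by the finer structural results on non-\js s recalled in \S 2, and it is also here that the additional hypothesis $\inf_{k}|\lambda_{0}^{n_{k}}-1|>0$ enters, through the translation by $\lambda_{0}$ that converts $\mu$ into a spectral measure $\sigma$ avoiding $1$ under every iterate. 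Once this spectral input is in hand, the Gaussian construction and the sectorial set above convert the pointwise separation $|\lambda^{n_{k}}-1|\geq 3\delta$ on $\supp(\sigma)$ into honest set-theoretic non-recurrence.
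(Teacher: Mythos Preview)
Your approach differs from the paper's, and the step you yourself flag as the ``main obstacle'' is where it actually fails.

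The paper does not go through a spectral measure and a probabilistic Borel--Cantelli argument. It invokes the explicit diagonal-plus-backward-shift construction of \cite{EG} on $\ell_{2}(\N)$: for any $\delta'>0$ this yields a weakly mixing operator $T=D+B$ with $\sup_{k}\|T^{n_{k}}-I\|<\delta'$. The point is that the non-Jamison hypothesis supplies exactly a \emph{uniform-in-$k$} bound (a perfect set $K\ni 1$ for the metric $d_{(n_{k})}(\lambda,\mu)=\sup_{k}|\lambda^{n_{k}}-\mu^{n_{k}}|$), and the \cite{EG} construction converts this into operator-norm control. Setting $\delta=\inf_{k}|\lambda_{0}^{n_{k}}-1|$ and arranging $\sup_{k}\|T^{n_{k}}-I\|<\delta/2$, one passes to $S=\lambda_{0}T$; for $u$ in a small ball $U_{\gamma}=B(e_{1},\gamma)$ the estimate
\[
\|S^{n_{k}}u-u\|\ \ge\ |\lambda_{0}^{n_{k}}-1|\,\|u\|-\tfrac{\delta}{2}\|u\|\ >\ 2\gamma
\]
gives $S^{n_{k}}U_{\gamma}\cap U_{\gamma}=\varnothing$ for every $k$. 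The non-recurrence is deterministic; no decay in $k$ is needed, and none is available.

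Your scheme, by contrast, requires a continuous $\mu$ with $\sup_{\nu\in\supp\mu}|\nu^{n_{k}}-1|\le\varepsilon_{k}\to 0$. This does not follow from non-Jamison, which only says that for each $\varepsilon>0$ the set $\{\lambda\ne 1:\sup_{k}|\lambda^{n_{k}}-1|<\varepsilon\}$ is nonempty (indeed perfect): a small bound uniform in $k$, not decay as $k\to\infty$. Egorov cannot be invoked because you have no pointwise convergence $\nu^{n_{k}}\to 1$ to start from: your parenthetical that non-Jamison ``provides a continuous measure along which $(n_{k})$ acts rigidly'' is not established in the paper, and even genuine rigidity $\hat\sigma(n_{k})\to 1$ is only an $L^{2}$ statement, yielding a.e.\ convergence along a subsequence rather than along the full sequence. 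Without $\varepsilon_{k}\to 0$ each event $\{|E_{k}|>\eta\}$ has a fixed positive $m$-measure, the union over $k$ need not be small, and your set $A$ may be null. Finally, the claim that $\varepsilon_{k}$ can decay ``as fast as one wishes'' is explicitly refuted, already at the level of Fourier coefficients, by Example~\ref{ex1}.
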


Our second result concerns  sequences $(n_{k})_{k\ge 0}$ such that $n_{k}$ divides $n_{k+1}$ for each $k\ge 1$.

\begin{theorem}\label{th2}
 Let $(n_{k})_{k\ge 0}$ be a sequence such that $n_{k}|n_{k+1}$ for each $k\ge 1$. Then  for any $p\in\Z$, the set $\{n_{k}-p \textrm{ ; } k\ge 0\}\cap\N$ is a \nrec\ set for some \wmx\ \lds. This applies in particular to the set $\{n_{k}\}$ itself.
\end{theorem}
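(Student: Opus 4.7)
The natural strategy of deducing Theorem \ref{th2} from Theorem \ref{th1} applied to the shifted sequence $(n_k - p)$ fails in general. For instance, when $n_k = 2^k$ and $p = 1$, a short induction (if $\sup_k \|(2^k-1)\theta\| < \epsilon$ with $\epsilon$ small, then by bootstrapping $|\theta| < \epsilon (2^k-1)^{-1}$ for every $k$, forcing $\theta = 0$) shows that $(2^k - 1)$ is itself a \js, so there is no $\theta \in \T$ with $\sup_k \|(2^k - 1)\theta\|$ arbitrarily small, nor any $\lambda_0 \in \T$ with $\inf_k |\lambda_0^{2^k - 1} - 1| > 0$. So a direct construction of the \wmx\ \lds\ that leverages the divisibility structure in a more essential way than via a reduction to Theorem \ref{th1} is needed.

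My plan is as follows. First, use the divisibility $n_k | n_{k+1}$ to construct a $\lambda_0 \in \T$ and a Cantor set $E \subset \T$ of companion eigenvalues such that $\inf_k |\lambda_0^{n_k-p} - 1| > 0$. Writing $d_{k+1} = n_{k+1}/n_k$ and $\alpha_k = \{n_k \theta\}$ for $\lambda_0 = e^{2\pi i \theta}$, the divisibility gives the recursion $\alpha_{k+1} = \{d_{k+1}\alpha_k\}$, and a Cantor-type backward construction produces $\theta$ with $\alpha_k$ staying in a prescribed arc $I \subset \T \setminus \{1\}$ for every $k$. The free parameters of this backward construction form a Cantor set in $\T$, and within it one selects a $\theta$ for which $\{p\theta\}$ lies outside a neighborhood of $I$, yielding the required lower bound. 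The extra ingredient compared to Theorem \ref{th1} is the dense subgroup $\bigcup_k \mu_{n_k} \subset \T$ (where $\mu_n$ denotes the $n$-th roots of unity): any $\mu \in \mu_{n_K}$ satisfies $\mu^{n_k} = 1$ for $k \ge K$, so the translates $\lambda\mu$ with $\lambda \in E$ and $\mu \in \bigcup_k \mu_{n_k}$ all share the same asymptotic behaviour under $T^{n_k}$, and the set $E \cdot \bigcup_k \mu_{n_k}$ is dense in $\T$.

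Second, construct an operator $T$ on a separable Hilbert space $H$ with a perfectly spanning set of unimodular eigenvectors whose eigenvalues cover $E \cdot \bigcup_k \mu_{n_k}$; the density of this set in $\T$ provides perfect spanning. By the Bayart--Grivaux criterion described earlier in the paper, $T$ admits a non-degenerate Gaussian $T$-invariant measure $m$ making $(H, \B, m, T)$ into a \wmx\ \mpt. Finally, let $f \in H^*$ be an eigenfunctional of $T^*$ at $\overline{\lambda_0}$ (or a suitable finite linear combination along the family), and set $A = f^{-1}(B)$ where $B \subset \C$ is a circular sector of aperture small enough that $\lambda_0^{n_k - p} B \cap B = \emptyset$ for every $k$ with $n_k > p$; this is possible by the bound from the first step. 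Then $m(A) > 0$ by non-degeneracy of $m$ and $T^{-(n_k-p)}A \cap A = f^{-1}(\lambda_0^{n_k-p}B \cap B) = \emptyset$, proving non-\rec.

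The main obstacle is the joint construction in the first step: one has to keep $\{n_k\theta\}$ inside the arc $I$ (controlling $|\lambda_0^{n_k}-1|$ from below) while simultaneously placing $\{p\theta\}$ away from $I$ (to control $|\lambda_0^{n_k-p}-1|$ from below). When $d_k = 2$ for all $k$, the backward construction has very little freedom---one verifies that the largest arc invariant under the backward recursion is essentially $[1/4, 3/4]$, and within it the choice of each $m_k \in \{0,1\}$ is forced as soon as $\alpha_{k+1} \neq 1/2$. Resolving this tension is the technical heart of the proof; it is here that one must exploit the full flexibility afforded by the dense subgroup $\bigcup_k \mu_{n_k}$ and the Baire/dimension structure of the Cantor family of valid $\theta$, choosing the eigenfunctional $f$ as a combination along a translated family rather than at a single $\lambda_0$.
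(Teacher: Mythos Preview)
Your proposal has a structural incompatibility at the final step. You take $f\in H^{*}$ with $T^{*}f=\overline{\lambda_{0}}\,f$, so that $f\circ T=\overline{\lambda_{0}}\,f$ as functions on $H$. But for a non-degenerate Gaussian measure $m$ every non-zero continuous linear functional is a non-constant element of $L^{2}(H,m)$; hence $f$ would be a non-constant eigenfunction of the Koopman operator $U_{T}$ with unimodular eigenvalue $\overline{\lambda_{0}}$, and this is precisely what weak mixing forbids. The hedge ``or a suitable finite linear combination along the family'' does not rescue the argument: each summand would itself be a Koopman eigenfunction (so weak mixing still fails), while a genuine combination is no longer an eigenfunction and the identity $T^{-(n_{k}-p)}A\cap A=f^{-1}(\lambda_{0}^{\,n_{k}-p}B\cap B)$ collapses. (A side remark: your claim that no $\lambda_{0}$ satisfies $\inf_{k}|\lambda_{0}^{\,2^{k}-1}-1|>0$ is false, since $\lambda_{0}=-1$ gives $(-1)^{2^{k}-1}=-1$ for every $k\ge 1$; this does not save the scheme, but it shows the obstruction is not where you placed it.) Finally, you yourself flag that the ``technical heart'' of step~1 is unresolved, so the proposal is a plan with both an open construction and an impossible endgame.

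The paper's route is quite different and never seeks a single distinguished eigenvalue for $p\ne 0$. The key input is a lemma of Kahane: because $n_{k}\mid n_{k+1}$, for any prescribed $(a_{k})\downarrow 0$ with $\sum a_{k}=+\infty$ there is a continuous probability measure $\sigma$ on $\T$ with $|\hat\sigma(n_{k})-1|\le a_{k}$. One then takes the Kalish-type operator $T_{K}$ attached to $\sigma$; it is weakly mixing with respect to the canonical Gaussian measure $m_{\sigma}$, and the quantitative rigidity yields $m_{\sigma}(U\triangle T_{K}^{\,n_{k}}U)\le C\,a_{k}^{1/3}$ for half-spaces $U=\{x:\langle x_{0},x\rangle\in R\}$ defined by an \emph{arbitrary} non-zero vector $x_{0}$ (no adjoint-eigenvector condition). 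Choosing two such measures $\sigma,\tau$ and a partition $\N=A\cup B$ so that $\sum_{k\in A}a_{k}^{1/3}$ and $\sum_{k\in B}b_{k}^{1/3}$ converge while $\sum a_{k}=\sum b_{k}=+\infty$, one runs the Bergelson--del~Junco--Lema\'nczyk--Rosenblatt argument: ergodicity of $T_{K}^{\,p}$ gives $C_{0}\subset U$ with $m_{\sigma}(T_{K}^{\,p}C_{0}\cap U)=0$, and summable rigidity lets one trim $T_{K}^{\,p}C_{0}$ to a positive-measure $C'$ with $m_{\sigma}(T_{K}^{\,n_{k}-p}C'\cap C')=0$ for all $k\in A$; the product with the analogous set for $T_{L}$ handles $k\in B$. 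Only for $p=0$ does the paper twist by some $\lambda_{0}$ with $\inf_{k}|\lambda_{0}^{\,n_{k}}-1|>0$ (available by lacunarity), and even there the non-recurrent set is cut out by an arbitrary linear functional combined with the rigidity estimate, not by an eigenfunctional of $T^{*}$.
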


The proof of Theorem \ref{th2} uses the notion of \rg ity, which was thoroughly explored in the context of \wmx\ systems in the paper \cite{BDLR}, and also in \cite{EG}. Part of the proof of Theorem \ref{th2} relies on the fact that if $(n_{k})_{k\ge 0}$ is a \rs\ in a certain strong sense, then $\{n_{k}-p \textrm{ ; } k\ge 0\}\cap\N$ is a \nrec\ set for some \wmx\ \ds, whatever the choice of $p\in\Z\setminus\{0\}$. It seems to be an open question whether, if $(n_{k})_{k\ge 0}$ is a \rs\ and $p\in\Z\setminus\{0\}$, the set  $\{n_{k}-p \textrm{ ; } k\ge 0\}\cap\N$ is always a \nrec\ set for some \wmx\ system.

\par\smallskip
A central open question in the paper \cite{BDLR} concerns lacunary sequences: if $(n_{k})_{k\ge 0}$ is a lacunary sequence, i.e. if there exists an $a>1$ such that $\frac{n_{k+1}}{n_{k}}>a$ for any $k$, then a result of Pollington \cite{P} and de Mathan \cite{dM} is that there exists an element $\lambda=e^{2i
\pi\theta} \in\T$ with $\theta$ irrational and a $\delta >0$ such that $|\lambda ^{n_{k}}-1|>\delta $ for all $k$. So $\{n_{k}\}$ is a \nrec\ set for some \erg\ \ds. Hence the following natural question:

\begin{question}\cite{BDLR}\label{q1}
 If $(n_{k})_{k\ge 0}$ is a lacunary sequence, does there always exist a \wmx\ \ds\ for which the set $\{n_{k}\}$ is not \rect?
\end{question}

Theorem \ref{th2} provides a positive answer to this question for sequences such that $n_{k}|n_{k+1}$ for each $k\ge 0$. Cutting and stacking constructions can also be used to
exhibit some \nrect\ lacunary sets (see Theorem \ref{prop2}, which generalizes the result of \cite{BDLR} that the set $\{\frac{3^{k+1}-1}{2}-1 \textrm{ ; } k\ge 0\}$ is \nrect\ for the Chacon transformation).
\par\smallskip

Our last result concerns sets which, for some fixed integer $r\ge 1$, are \rect\ (in the topological sense) for all products of $r$ rotations on the unit circle. In accordance with the terminology of \cite{K}, let us call such sets \emph{$r$-Bohr sets}. It is an open question, equivalent to an old combinatorial problem of Veech \cite{Ve} about syndetic sets and the Bohr topology on $\Z$, to know whether any set which is \rect\ for all finite products of rotations (such a set is called a \emph{Bohr set}) is necessarily topologically \rect\ for all \ds s. See \cite{Gl2}, \cite{GW}, \cite{BoGl} or \cite{K} for more about this question. It is shown in \cite{K} that there exists for each $r\ge 1$ sets which are $r$-Bohr but not $(r+1)$-Bohr. Another construction of $r$-Bohr sets which are not Bohr is given in \cite{GR}. One of the interests of the sets constructed in \cite{GR} is that they have density zero, contrary to the sets of \cite{K} which have positive density. Hence the sets of \cite{K}, which are \nrect\ for some product of $r+1$ rotations on $\T$, are recurrent for all \wmx\ \ds s. We show here, as a consequence of Theorem \ref{th1}, that the sets $\{n_{k}^{(r)}\}$ of \cite{GR} are $r$-Bohr, but are not recurrent for some \wmx\ dynamical systems:

\begin{theorem}\label{th3}
For any integer $r\ge 1$ there exist sets  $\{n_{k}^{(r)}\}$ of integers which are $r$-Bohr, but which are \nrec\ sets for some \wmx\ \lds s.
\end{theorem}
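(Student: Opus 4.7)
The plan is to derive Theorem~\ref{th3} by applying Theorem~\ref{th1} to the sequences $\{n_k^{(r)}\}$ constructed in \cite{GR}. Since the $r$-Bohr property of these sets is already established in \cite{GR}, the task reduces to verifying that each $(n_k^{(r)})_{k\ge 0}$ satisfies one of the hypotheses of Theorem~\ref{th1}.

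First, I would recall the inductive construction from \cite{GR}: for each $r \ge 1$, the sequence $(n_k^{(r)})_{k\ge 0}$ is obtained via simultaneous rational approximation of $r$ carefully chosen irrational rotation numbers, so as to force $r$-Bohr recurrence together with density zero. The construction can be arranged so that at each inductive step, $n_{k+1}^{(r)}$ is taken to be much larger than $n_k^{(r)}$ to accommodate the joint approximation requirements; in particular, the ratios $n_{k+1}^{(r)}/n_k^{(r)}$ can be driven to infinity.

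Given this super-lacunary growth, the first part of Theorem~\ref{th1} applies directly and produces a weakly mixing linear dynamical system $(X, T, m)$ for which $\{n_k^{(r)}\}$ is a non-recurrence set. Combined with the $r$-Bohr property from \cite{GR}, this yields Theorem~\ref{th3}.

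The main obstacle is to confirm that the \cite{GR} recursion really can be run to produce a super-lacunary sequence while still preserving the $r$-Bohr property: these two features pull in opposite directions, since super-lacunarity invites a Pollington--de~Mathan type obstruction to Bohr recurrence. I expect this tension to be resolved by exploiting the freedom in the choice of approximation denominators at each step: by allowing the denominators to grow as fast as needed, one can keep the simultaneous Diophantine conditions that guarantee $r$-Bohr recurrence while forcing the jumps $n_{k+1}^{(r)}/n_k^{(r)}$ to tend to infinity. Once this is carried out, the rest of the argument is a direct appeal to Theorem~\ref{th1}.
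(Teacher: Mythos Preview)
Your approach has a genuine obstruction that cannot be repaired. If the sequence $(n_k^{(r)})$ were super-lacunary, it would in particular be lacunary, and then the Pollington--de~Mathan theorem you yourself invoke would produce a $\lambda_0\in\T$ with $\inf_k|\lambda_0^{n_k^{(r)}}-1|>0$. But this says precisely that $\{n_k^{(r)}\}$ is \emph{not} recurrent for the single rotation by $\lambda_0$, hence not even $1$-Bohr. So ``$r$-Bohr'' and ``$n_{k+1}^{(r)}/n_k^{(r)}\to\infty$'' are outright incompatible; no amount of freedom in the approximation denominators will reconcile them. The ``first part'' of Theorem~\ref{th1} is therefore unavailable here, and the more general hypothesis of Theorem~\ref{th1} (non-Jamison plus existence of a $\lambda_0$ bounded away from $1$ along the sequence) fails for the same reason when applied to the whole set $\{n_k^{(r)}\}$: any such $\lambda_0$ would again witness failure of $1$-Bohr recurrence.

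The paper's proof avoids this by never applying Theorem~\ref{th1} to the full set. Instead it uses the explicit block structure of the sets from \cite{GR}: $\{n_k^{(r)}\}$ is a finite union $\{n_{k,0}^{(r)}\}\cup\bigcup_{A\subseteq\{1,\dots,r-1\}}\{n_{k,A}^{(r)}\}$, and one checks separately that each piece is a non-Jamison sequence (Lemma~\ref{lem2}, using that the parameters $H_N$ can be made to grow arbitrarily fast) and that each piece admits its own $\lambda_0$ bounded away from $1$ (Proposition~4.5 of \cite{GR}). Theorem~\ref{th1} then applies piece by piece, and one concludes via the elementary fact that a finite union of non-recurrence sets for weakly mixing linear systems is again such a set (by taking a product). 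The point is that the individual pieces need not be---and are not---$r$-Bohr, so there is no contradiction; only their union is.
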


The paper is organized as follows: in Section $2$ we recall some results about \lds s and \js s which will be needed for the proofs of the theorems. In Sections $3$, $4$ and $5$ we present the proofs of Theorems \ref{th1}, \ref{th2} and \ref{th3} respectively. Section $6$ contains the proof of Theorem \ref{prop2} concerning a construction of \wmx\ \nrect\ systems by cutting and stacking.

\par\bigskip
\textbf{Acknowledgement:} I am grateful to Maria Roginskaya for stimulating discussions about some of the results of this paper.

\section{Weakly mixing linear dynamical systems and Jamison sequences}

In this section, which is expository, we briefly review some results concerning \lds s. They will be necesary for the proofs of the majority of the results in this paper. For a more detailed account, we refer the reader to the recent book \cite{BM}.

\subsection{Linear dynamical systems}
A \lds\ consists of a pair $(X,T)$, where $X$ is an infinite-dimensional complex \sep\ Banach space, and $T$ is a bounded linear \op\ on $X$. Under some conditions on 
$T$ and $X$, it is possible to construct a \nd\ \ga\ \mea\ $m$ on $X$ \wrt\ which $T$ defines a \mea-preserving \wmx\ transformation. Recall that if $\mathcal{B}$ denotes the $\sigma  $-algebra of Borel subsets of $X$, a Borel \proba\ \mea\ $m$ on $X$ is said to be (centered) \emph{\ga}\ if any element $x^{*}\in X^{*}$, considered as a complex-valued random variable on $(X,\mathcal{B},m)$, has (centered) \ga\ law: for any Borel subset $A$ of $\C$,
$$m(\{x\in X \textrm{ ; } \pss{x^{*}}{x} \in A\})=\frac{1}{2\pi \sigma _{x^{*}}^{2}}\int_{A}e^{-\frac{(u^{2}+v^{2})}{2\sigma  _{x^{*}}^{2}}}dudv$$
for some $\sigma  _{x^{*}}>0$. The \mea\ $m$ is said to be \emph{\nd}\ if its topological support is the whole space $X$, i.e if $m(U)>0$ for any non-empty open subset $U$ of $X$. Whenever $T$ is a \mpt\ of $(X, \mathcal{B},m)$, we will denote by $U_{T}$ the associated Koopman \op\ on $L^{2}(X, \mathcal{B},m)$: for $f\in L^{2}(X, \mathcal{B},m)$, $U_{T}f=f\circ T$.
\par\smallskip
The study of \ops\ on Banach spaces from the \erg\ point of view was initiated by Flytzanis in \cite{Fl}, and developed later on in the two papers \cite{BayGr1} and \cite{BayGr2}. Among other things, a necessary and sufficient condition was obtained, under which a bounded \op\ on a complex infinite-dimensional \sep\ Hilbert space $H$ admits a \nd\ \inv\ \ga\ \mea\ \wrt\ which it defines an \erg\ (or, equivalently here, \wmx)\ transformation of $X$. This condition involves \eve s of $T$ associated to \eva s of modulus $1$, which we call \emph{unimodular \eve s}. It states roughly that if $T\in \mathcal{B}(H)$ has a huge supply of unimodular \eve s (which happens in very many concrete situations), $T$ defines a \wmx\ \mpt\ of $H$ \wrt\ some \nd\ \ga\ \mea\ $m$. This condition is given in \cite{BayGr1} and \cite{BayGr2} in terms involving perfectly spanning sets of unimodular \eve s, but a simpler equivalent condition was recently obtained in the paper \cite{Gr}:

\begin{theorem}\label{th0}\cite{BayGr1}, \cite{Gr}
 Let $T$ be a bounded linear \op\ on a complex \sep\ infinite-dimensional Hilbert space $H$. The following assertions are equivalent:
 \begin{itemize}
  \item [(1)] for any countable subset $\Delta  $ of $\T$, the linear span of the eigenspaces $\ker(T-\lambda )$, $\lambda \in\T\setminus \Delta  $, is dense in $H$;
  
  \item [(2)] there exists a \nd\ \ga\ \mea\ $m$ on $H$ such that $T$ is a \wmx\ \mpt\ of $(H, \mathcal{B},m)$.
 \end{itemize}
\end{theorem}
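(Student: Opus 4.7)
The plan is to treat the two implications separately, with $(2)\Rightarrow(1)$ being the easy direction. Assume $m$ is a \nd\ \inv\ \gam\ making $T$ \wmx, and suppose for a contradiction that (1) fails, so there is a countable $\Delta\subset\T$ and a nonzero $x^*\in H^*$ vanishing on every $\ker(T-\lambda)$ with $\lambda\in\T\setminus\Delta$. Viewed as a random variable on $(H,m)$, $x^*$ belongs to the first Wiener chaos of $L^2(H,m)$, on which $U_T$ is unitarily equivalent to $T^*$ acting on $H^*$ with the inner product induced by the covariance of $m$. The cyclic subspace generated by $x^*$ under $U_T$ therefore carries a scalar spectral measure $\mu_{x^*}$ on $\T$, and the vanishing of $x^*$ on $\ker(T-\lambda)$ for all $\lambda\notin\Delta$ forces $\mu_{x^*}$ to be carried by the countable set $\overline{\Delta}$, hence to be purely atomic; this yields a non-constant \eve\ of $U_T$ and contradicts weak mixing.

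For $(1)\Rightarrow(2)$ I would follow the Gaussian construction from \cite{BayGr1}. The first step is to extract from (1) a continuous Borel \prob\ \mea\ $\sigma$ on $\T$ and a bounded Borel map $E:\T\to H$ with $TE(\lambda)=\lambda E(\lambda)$ for $\sigma$-a.e.\ $\lambda$, enjoying the perfect-spanning property that $\pss{x^*}{E(\cdot)}=0$ in $L^2(\T,\sigma)$ implies $x^*=0$. One then defines $K:L^2(\T,\sigma)\to H$ by $Kf=\int_{\T}f(\lambda)E(\lambda)\,d\sigma(\lambda)$, a bounded \op\ intertwining multiplication by $\lambda$ on $L^2(\T,\sigma)$ with $T$ on $H$. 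Choosing $E$ so that $K$ is Hilbert--Schmidt, the push-forward by $K$ of the standard Gaussian cylinder measure on $L^2(\T,\sigma)$ is a genuine Borel \gam\ $m$ on $H$; non-degeneracy of $m$ follows from perfect spanning, and $T$-invariance from the intertwining $TK=KM_\lambda$.

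Weak mixing of $T$ with respect to $m$ then follows from the Wiener--It\^o chaos decomposition of $L^2(H,m)$, which exhibits $U_T$ as an orthogonal sum of symmetric tensor powers of the multiplication operator $M_\lambda$ on $L^2(\T,\sigma)$. Continuity of $\sigma$ implies that $M_\lambda$ has purely continuous spectrum, and a standard Fubini argument on $\T^k$ shows that symmetric tensor powers of a unitary with continuous spectrum also have continuous spectrum, so $U_T$ has no non-constant \eve. The main obstacle, and the precise place where \cite{Gr} refines \cite{BayGr1}, is the initial extraction step: producing a measurable pair $(\sigma,E)$ with $\sigma$ continuous from the bare countable-exclusion condition (1), rather than assuming a perfectly spanning family as input. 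This requires a projection-valued-measure and exhaustion argument that peels off continuous spectral components one at a time while controlling the countable sets of $\lambda$'s discarded at each stage.
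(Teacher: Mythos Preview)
The paper does not prove this theorem: it is stated in the expository Section~2.1 with citations to \cite{BayGr1} and \cite{Gr} and used as a black box thereafter. So there is no ``paper's own proof'' to compare against; what can be assessed is whether your sketch matches the arguments in those references.

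Your outline of $(1)\Rightarrow(2)$ is faithful to the approach of \cite{BayGr1} and \cite{Gr}: build a $\T$-eigenvector field $E$ and a continuous measure $\sigma$, form the operator $K:L^{2}(\T,\sigma)\to H$, push forward the Gaussian, and read off weak mixing from the Wiener--It\^o decomposition together with continuity of $\sigma$. You also correctly identify that the contribution of \cite{Gr} is precisely the extraction of a perfectly spanning pair $(\sigma,E)$ from the bare countable-exclusion hypothesis~(1).

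Your sketch of $(2)\Rightarrow(1)$, however, has a genuine gap. You assert that if $x^{*}$ vanishes on $\ker(T-\lambda)$ for every $\lambda\in\T\setminus\Delta$, then the scalar spectral measure $\mu_{x^{*}}$ of $f_{x^{*}}$ under $U_{T}$ is carried by $\Delta$. This implication is not automatic: the spectral measure $\mu_{x^{*}}$ comes from the unitary $U_{T}$ on the first chaos (equivalently, from $T$ acting unitarily on the Cameron--Martin space), and a priori the spectral subspaces of that unitary have no direct relation to the eigenspaces $\ker(T-\lambda)$ of $T$ acting on the original space $H$. Bridging the two is exactly the non-trivial content of the $(2)\Rightarrow(1)$ direction, going back to Flytzanis \cite{Fl}: one must show that the spectral decomposition of $T$ on the Cameron--Martin space disintegrates into genuine $H$-eigenvectors of $T$, so that the covariance admits a representation $R=\int_{\T}E(\lambda)\otimes\overline{E(\lambda)}\,d\sigma(\lambda)$ with $TE(\lambda)=\lambda E(\lambda)$. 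Only once this is established does $\mu_{x^{*}}=|\langle x^{*},E(\lambda)\rangle|^{2}\,d\sigma(\lambda)$, from which your conclusion follows. As written, your argument assumes the eigenvector-field representation it is meant to produce.
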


We refer the reader to \cite{BayGr1}, \cite{BayGr2}, \cite{BoGE1} or \cite{BM} for instance for many examples of such \wmx\ \lds s, living not only on Hilbert spaces but also on other Banach or Fr\'echet spaces. Extensions of Theorem \ref{th0} to the Banach space setting were obtained in \cite{BayGr2} and \cite{BM}, culminating in the recent paper \cite{BM2} where it was shown that the implication $(1)\implies (2)$ remains true when $X$ is an arbitrary complex \sep\ Banach space.
\par\smallskip
Let us now present a particular class of \ops, which give in a very handy way examples of \ga\ \wmx\ systems on a Hilbert space associated to prescribed continuous \mea s on $\T$.

\subsection{Kalish-type operators}

These \ops\ were introduced by Kalish in \cite{Kal} in order to exhibit, for each closed subset $F$ of $\T$, examples of \ops\ on a Hilbert space whose spectrum and point spectrum coincide with $F$. The construction goes this way. On the space $L^{2}(\T)$ of square-integrable functions on $\T$, consider the \op\ $T=M-J$, where $M$ and $J$ are defined as follows: $M$ is the multiplication \op\ by the independent variable $\zeta $ on $L^{2}(\T)$, and $J$ is the integration operator: for any $f\in L^{2}(\T)$, $Mf:\zeta \mapsto \zeta f(\zeta )$ and $Jf:\zeta \mapsto \int_{(1,\zeta )}f$, where, if $\zeta =e^{i\theta }$ with $0\le \theta <2\pi$, $(1,\zeta )$ denotes the arc $\{e^{i\alpha } \textrm{ ; } 0\le \alpha \le \theta \}$, and $(\zeta ,1)$ denotes the arc $\{e^{i\alpha } \textrm{ ; } \theta \le \alpha \le 2\pi\}$. It is not difficult to check that for any $\lambda \in\T$ the characteristic function $\chi _{\lambda }$ of the arc $(\lambda ,1)$ satisfies $T\chi_{\lambda}=\lambda \chi_{\lambda}$, so that when $\chi_{\lambda}$ is non-zero, it
is an \eve\ of $T$ associated to the \eva\ $\lambda $.
\par\smallskip
Let now $\sigma  $ be a continuous \proba\ \mea\ on $\T$, and let $K$ denote its support. The closed linear span $H_{K}$ in $L^{2}(\T)$ of the \eve s  $\chi _{\lambda }$ of $T$, $\lambda \in K$, is $T$-invariant, and so $T$ induces an \op\ $T_{K}$ on $H_{K}$. One sees easily that the \eve\ field (for $T_{K}$) $E:K\rightarrow H_{K}$ which maps $\lambda $ to $\chi _{\lambda }$ is continuous, and that these \eve s $E(\lambda )$, $\lambda \in K$, span a dense subspace of $H_{K}$. Moreover the spectrum of $T_{K}$ coincides with $K$, and $T_{K}$ is invertible. It follows immediately from Theorem \ref{th0} that $T_{K}$ admits a \nd\ \ga\ \mea\ \wrt\ which $T_{K}$ becomes a \wmx\ \mpt\ of $H_{K}$, but we actually know more, and there is a ``canonical'' way to associate a suitable \ga\ \mea\ $m$ to the \eve\ field $E$ and the \mea\ $\sigma  $. One can construct a \mea\ $m$ such that for any $x,y\in H_{K}$,
$$\int_{H_{K}}\pss{x}{z}\overline{\pss{y}{z}}dm(z)=\int_{\T} \pss{x}{E(\lambda )}\overline{\pss{y}{E(\lambda )}}d\sigma  (\lambda ),$$ and the continuity of $E$ on $K$ combined with the fact that $\textrm{span}[E(\lambda ) \textrm{ ; } \lambda \in K]$ is dense in $H_{K}$ implies that $m$ is \nd. Also, it follows from the continuity of the \mea\ $\sigma  $ (see \cite{BayGr1} for details) that $T_{K}$ is \wmx\ \wrt\ this \mea\ $m$. Let us now define 
\begin{eqnarray*}
 V:& L^{2}(\T,\sigma  )&\rightarrow  L^{2}(\T,\sigma  )\\
 &f&\mapsto [\lambda \mapsto \lambda f(\lambda )]
\end{eqnarray*}
and
\begin{eqnarray*}
 A:& L^{2}(\T,\sigma  )&\rightarrow H_{K}\\
 &f&\mapsto \int_{\T}f(\lambda )E(\lambda )d\sigma  (\lambda ).
\end{eqnarray*}
One sees easily that $TA=AV$, and so we have for all $x,y\in H_{K}$ and all $n\ge 0$
\begin{eqnarray}\label{eq1}
\int_{H_{K}}\pss{x}{T_{K}^{n}z}\overline{\pss{y}{z}}dm(z) =\int_{\T}\lambda ^{n}\pss{x}{E(\lambda )}\overline{\pss{y}{E(\lambda )}}d\sigma  (\lambda ).
\end{eqnarray}

As explained in \cite{EG}, these Kalish-type \ops\  can be used to transfer properties of the \mea\ $\sigma  $ into \erg\  properties of the \op\ $T_{K}$. In particular, suppose that $\sigma  $ is such that $\hat{\sigma  }({n_{k}})\to 1$ as $k\to + \infty $ for some strictly increasing sequence $(n_{k})_{k\ge 0}$ of integers. It follows from (\ref{eq1}) that 
\begin{eqnarray*}
 \int_{H_{K}}\pss{x}{(T_{K}^{n_{k}}-I)z}\overline{\pss{y}{z}}dm(z) \to 0 
 \textrm{ as } {n_{k}\to + \infty } 
 \textrm{ for all }x,y \in H_{K}
\end{eqnarray*}
and also (see \cite{EG} for details) that $U_{T_{K}}^{n_{k}}\to I$ in the Weak Operator Topology. Thus $T_{K}$ is an example of a \wmx\ transformation which is \emph{\rg}\ 
\wrt\ $(n_{k})_{k\ge 0}$. The notion of \rg\ dynamical system was introduced by Furstenberg and Weiss in \cite{FuW} in their study of midly mixing \ds s (see also \cite{Fu}).

\begin{definition}
A \mpt\ $T$ of a non-atomic \proba\ space $(X  ,\mathcal{B}, \mu )$ is said to be \emph{\rg}\ if there exists a sequence $(n_{k})_{k\ge 0}$ of integers such that for any $A\in \mathcal{B}$, $\mu (T^{-n_{k}}A\triangle A)\to 0$ as $n_{k}\to +\infty $.
\end{definition}

Let us recall the following terminology of \cite{BDLR} and \cite{EG}: a sequence  $(n_{k})_{k\ge 0}$  is said to be a \emph{rigidity sequence} if there exists a \wmx\ \mpt\ which is \rg\ \wrt\  $(n_{k})_{k\ge 0}$. Rigidity sequences can be characterized in terms of Fourier coefficients of measures:

\begin{theorem}\label{th00}\cite{BDLR}, \cite{EG}
The following assertions are equivalent:
\begin{itemize}
 \item [(1)]  $(n_{k})_{k\ge 0}$  is a rigidity sequence;
 
 \item[(2)] there exists a continuous \proba\ \mea\ $\sigma  $ on $\T$ such that $\hat{\sigma  }({n_{k}})\to 1$ as $k\to + \infty $.
\end{itemize}
\end{theorem}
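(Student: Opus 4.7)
My plan for Theorem \ref{th00} splits naturally along the two implications, and the forward direction has already been essentially carried out in the preceding exposition of Kalish-type operators.

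For $(2)\Rightarrow(1)$, I would reuse verbatim the construction of Subsection~2.2. Given a continuous probability measure $\sigma$ on $\T$ with $\hat\sigma(n_{k})\to 1$, set $K=\supp\sigma$, form the operator $T_{K}$ on $H_{K}$, and endow $H_{K}$ with the canonical Gaussian measure $m$ associated to $\sigma$ and the eigenvector field $E$. Continuity of $\sigma$ already guarantees that $T_{K}$ is weakly mixing on $(H_{K},\mathcal{B},m)$. To verify rigidity along $(n_{k})$ it suffices to show that $U_{T_{K}}^{n_{k}}\to I$ in the weak operator topology of $L^{2}(H_{K},\mathcal{B},m)$, and on the dense subspace spanned by the linear functionals $z\mapsto \pss{x}{z}$ this reduces, via formula (\ref{eq1}), to
$$\int_{\T}(\lambda^{n_{k}}-1)\,\pss{x}{E(\lambda)}\,\overline{\pss{y}{E(\lambda)}}\,d\sigma(\lambda)\longrightarrow 0.$$
This is immediate from the identity $\int_{\T}|\lambda^{n_{k}}-1|^{2}\,d\sigma=2-2\,\Re\hat\sigma(n_{k})\to 0$ together with Cauchy--Schwarz, and it extends to all of $L^{2}(H_{K},\mathcal{B},m)$ by uniform boundedness of the isometries $U_{T_{K}}^{n_{k}}$.

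For the converse $(1)\Rightarrow(2)$, my approach is purely spectral. Let $T$ be a weakly mixing measure-preserving transformation of a non-atomic probability space $(X,\mathcal{B},\mu)$, rigid along $(n_{k})$. The rigidity condition $\mu(T^{-n_{k}}A\triangle A)\to 0$ for every $A\in\mathcal{B}$ translates into $\|U_{T}^{n_{k}}\mathbf{1}_{A}-\mathbf{1}_{A}\|_{2}\to 0$, and then, by linearity, density of simple functions in $L^{2}$, and the isometric character of $U_{T}$, into $U_{T}^{n_{k}}g\to g$ in $L^{2}$ for every $g\in L^{2}$. In particular $U_{T}$ has dense range; since it is an isometry, it is onto, hence unitary. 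I then fix any $f\in L^{2}$ with $\int_{X}f\,d\mu=0$ and $\|f\|_{2}=1$, and let $\sigma_{f}$ be the spectral measure of $f$ relative to $U_{T}$, so that $\pss{U_{T}^{n}f}{f}=\int_{\T}\lambda^{n}\,d\sigma_{f}(\lambda)=\hat\sigma_{f}(n)$ for every $n\in\Z$. The above yields $\hat\sigma_{f}(n_{k})\to \|f\|_{2}^{2}=1$, so that only the continuity of $\sigma_{f}$ remains to be established.

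The main point where care is required is this last continuity claim, and I would dispatch it by invoking the classical spectral characterization of weak mixing: $T$ is weakly mixing if and only if the only unimodular eigenvalue of $U_{T}$ with a non-zero eigenvector is $1$ with the constants as its eigenspace, equivalently, the spectral measure of every vector orthogonal to the constants is continuous. Since $f$ was chosen with $\int_{X}f\,d\mu=0$, this delivers the continuity of $\sigma_{f}$ and hence a continuous probability measure $\sigma:=\sigma_{f}$ on $\T$ with $\hat\sigma(n_{k})\to 1$. Apart from citing this standard spectral fact, every other ingredient of the argument is routine.
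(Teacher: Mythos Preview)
The paper does not actually prove Theorem~\ref{th00}: it is stated with citations to \cite{BDLR} and \cite{EG}, and only the implication $(2)\Rightarrow(1)$ is sketched in the paragraph preceding the statement, via the Kalish-type construction you also use. Your implication $(1)\Rightarrow(2)$ via the spectral measure of a mean-zero function is the standard argument and is correct; the paper says nothing about this direction.

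For $(2)\Rightarrow(1)$ there is a genuine gap in your write-up. You claim that the linear functionals $z\mapsto\pss{x}{z}$ span a \emph{dense} subspace of $L^{2}(H_{K},\mathcal{B},m)$, and then conclude WOT convergence of $U_{T_{K}}^{n_{k}}$ to $I$ by uniform boundedness. But for a Gaussian measure these linear functionals span only the first Wiener chaos, which is orthogonal to the constants and to all higher chaoses; their linear span is very far from dense in $L^{2}$. The paper is aware of this difficulty: after deriving the first-chaos convergence from (\ref{eq1}) it explicitly writes ``(see \cite{EG} for details) that $U_{T_{K}}^{n_{k}}\to I$ in the Weak Operator Topology''. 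The missing ingredient is the Fock-space structure of Gaussian $L^{2}$: once you have $\int_{\T}|\lambda^{n_{k}}-1|^{2}\,d\sigma(\lambda)\to 0$, the maximal spectral type of $U_{T_{K}}$ on the $j$-th chaos is $\sigma^{*j}$, and one checks that $\widehat{\sigma^{*j}}(n_{k})=\hat\sigma(n_{k})^{j}\to 1$ for each $j$, which yields strong convergence on every chaos and hence on all of $L^{2}$. Alternatively, one can argue that strong $L^{2}$-convergence on the first chaos propagates to indicators of cylinder sets (which generate $\mathcal{B}$) because $U_{T_{K}}$ is a Koopman operator. Either way, the step cannot be dismissed as a density-plus-boundedness routine.
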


If (2) in Theorem \ref{th00} above is satisfied, the Kalish-type \op\ $T_{K}$ associated to $\sigma  $ is \wmx\ and \rg\ \wrt\ $(n_{k})_{k\ge 0}$.

\subsection{Partially power-bounded operators}

In the study of \rec\ we are undertaking,
an especially interesting class of \lds s is the class of \ppb\ \ops:

\begin{definition}
 If $(n_{k})_{k \ge 0}$ is a strictly increasing sequence of integers, and $T$ is a bounded linear \op\ on the Banach space
 $X$, $T$ is said to be \emph{\ppb}\ \wrt\ $(n_{k})_{k \ge 0}$  if $\sup_{k \ge 0}||T^{n_{k}}||$ is finite.
\end{definition}
 In the rest of the paper we will denote by $\sigma  _{p}(T)\cap\T$ the \emph{unimodular point spectrum} of the \op\ $T$, i.e. the set of \eva s of $T$ which are of modulus $1$.
\par\smallskip
 When the
 sequence $(n_{k})_{k \ge 0}$ is ``too rich'', such \ppb\ \ops\ \wrt\ $(n_{k})_{k\ge 0}$ can have only countably many \eva s on the unit circle. These sequences $(n_{k})_{k\ge 0}$ are called \emph{\js s}, because of an old result of Jamison which states that the sequence $(k)$ has this property.

\begin{definition}\cite{BaGr2}
A sequence $(n_{k})_{k \ge 0}$ of integers is said to be a \emph{Jamison sequence} when the following property holds true: if $X$ is any complex separable Banach space and $T$ is any bounded linear operator on $X$ such that $\sup_{k \ge 0}||T^{n_{k}}||$ is finite, then $\sigma  _{p}(T)\cap\T$ is at most countable.
\end{definition}

It can happen that for some sequences $(n_{k})_{k\ge 0}$ (the non-Jamison sequences), there exists an \op\ on
some complex \sep\ Banach space $X$ which is 
\ppb\ \wrt\ $(n_{k})_{k\ge 0}$ and which has uncountably many \eva s on the unit circle. In this case, there are hopes that $T$ could define a \wmx\ transformation of $(X,\mathcal{B},m)$ for some suitable \ga\ \mea\ $m$ on $X$.
 \par\smallskip
 The study of \ppb\ \ops\ started with a paper of Ransford \cite{Ran}, and the first example of a \ppb\ \op\ with uncountable unimoduar point spectrum was given by Ransford and Roginskaya in \cite{RR}. Jamison and non-Jamison sequences were then further investigated in \cite{BaGr} and \cite{BaGr2}, and a complete characterization of \js s was obtained in \cite{BaGr2}:
 
 \begin{theorem}\label{th000}\cite{BaGr2}
  Let $(n_{k})_{k\ge 0}$ be a strictly increasing sequence of integers with $n_{0}=1$. The following assertions are equivalent:
  \begin{itemize}
   \item [(1)] $(n_{k})_{k\ge 0}$ is a \js, i.e. for any \sep\ Banach space $X$ and any bounded \op\ $T\in \mathcal{B}(X)$, $\sup_{k\ge 0}||T^{n_{k}}||<+\infty $ implies that $\sigma  _{p}(T)\cap\T$ is at most countable;
   
   \item [(2)] there exists an $\varepsilon >0$ such that for any 
  $ \lambda \in\T\setminus\{1\}$, $\sup_{k\ge 0}|\lambda ^{n_{k}}-1|\ge \varepsilon $.
  \end{itemize}
 \end{theorem}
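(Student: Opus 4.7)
The plan is to prove the two implications separately; the direction $(1) \implies (2)$ will be the main obstacle.

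For $(2) \implies (1)$, I would proceed by a direct quantitative separation argument. Assume (2) holds with constant $\varepsilon > 0$, and let $T$ be a bounded operator on a separable Banach space $X$ with $M := \sup_{k} \|T^{n_{k}}\| < \infty$. I would show that for any two distinct $\lambda, \mu \in \sigma_{p}(T) \cap \T$ and any choice of unit eigenvectors $x_{\lambda}, x_{\mu}$, the bound $\|x_{\lambda} - x_{\mu}\| \ge c := \varepsilon/(\varepsilon + M + 1)$ holds, so that the family of unit eigenvectors is $c$-separated in $X$ and hence countable by separability. Applying (2) to $\lambda/\mu \neq 1$ yields some $k_{0}$ with $|\lambda^{n_{k_{0}}} - \mu^{n_{k_{0}}}| \ge \varepsilon$. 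For any scalar $\alpha$, the identity
\[
T^{n_{k_{0}}}(x_{\lambda} - \alpha x_{\mu}) - \lambda^{n_{k_{0}}}(x_{\lambda} - \alpha x_{\mu}) = \alpha(\lambda^{n_{k_{0}}} - \mu^{n_{k_{0}}}) x_{\mu}
\]
together with $\|T^{n_{k_{0}}}\| \le M$ gives $|\alpha|\varepsilon \le (M+1)\|x_{\lambda} - \alpha x_{\mu}\|$. Combined with the trivial bound $\|x_{\lambda} - \alpha x_{\mu}\| \ge 1 - |\alpha|$, this forces $\|x_{\lambda} - \alpha x_{\mu}\| \ge c$ uniformly in $\alpha$, and in particular for $\alpha = 1$.

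For $(1) \implies (2)$ I would argue by contrapositive. Negating (2) yields a sequence $(\lambda_{n}) \subset \T \setminus \{1\}$ with $\sup_{k} |\lambda_{n}^{n_{k}} - 1| \to 0$; from this data one must build a separable Banach space $X$ and a bounded operator $T$ on $X$ satisfying $\sup_{k} \|T^{n_{k}}\| < \infty$ and with $\sigma_{p}(T) \cap \T$ uncountable. The naive diagonal construction $T e_{n} = \lambda_{n} e_{n}$ on $\ell^{2}$ is power-bounded but only produces countably many eigenvalues, so one must manufacture a continuum of additional unimodular eigenvalues through a Kalish-type construction: produce, out of the $\lambda_{n}$'s, a perfect closed set $K \subset \T$ together with a continuous probability measure $\sigma$ supported on $K$ such that $\sup_{k} \sup_{\lambda \in K}|\lambda^{n_{k}} - 1|$ stays uniformly bounded; the operator $T_{K}$ of Section 2.2 then has $K \subseteq \sigma_{p}(T_{K}) \cap \T$ uncountable by construction.

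The main obstacle is to guarantee that this $T_{K}$ remains partially power-bounded along $(n_{k})$. Here the key input is the elementary observation that the sets $\{\lambda \in \T : \sup_{k}|\lambda^{n_{k}} - 1| < \varepsilon\}$ are closed under products and inversion up to doubling $\varepsilon$; iterating this allows one to organise the $\lambda_{n}$'s into a Cantor-like perfect $K$ with controlled estimates propagating through the construction. The delicate remaining step is to transfer this uniform control into bounded operator norms $\|T_{K}^{n_{k}}\|$, which I would handle by choosing $\sigma$ so that $\widehat{\sigma}(n_{k})$ stays close to $1$ uniformly in $k$, following the technology developed in \cite{BaGr2}; this is where the real technical content of the theorem lies.
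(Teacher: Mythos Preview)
The paper does not actually prove this theorem: it is quoted from \cite{BaGr2} in the expository Section~2, so there is no ``paper's own proof'' to compare against line by line. That said, the paper does describe in Section~3 the construction underlying the closely related Theorem~\ref{th0000} from \cite{EG}, and this lets one see where your plan for $(1)\Rightarrow(2)$ goes wrong.

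Your argument for $(2)\Rightarrow(1)$ is correct and is essentially the standard one (up to the harmless imprecision that the supremum in (2) need not be attained, so one should take $k_{0}$ with $|\lambda^{n_{k_{0}}}-\mu^{n_{k_{0}}}|\ge\varepsilon/2$, say, and adjust the constant $c$).

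For $(1)\Rightarrow(2)$ there is a genuine gap. You correctly identify the key combinatorial input: the sets $\{\lambda\in\T:\sup_{k}|\lambda^{n_{k}}-1|<\varepsilon\}$ are stable under products up to doubling $\varepsilon$, and this allows one to build a perfect set $K\subset\T$ on which the pseudometric $d_{(n_{k})}(\lambda,\mu)=\sup_{k}|\lambda^{n_{k}}-\mu^{n_{k}}|$ is small. But the operator you then propose is the wrong one. The Kalish operator $T_{K}$ of Section~2.2 has eigenvectors $\chi_{\lambda}$ which are far from orthogonal in $L^{2}(\T)$, and there is no mechanism by which a uniform bound on $\sup_{\lambda\in K}|\lambda^{n_{k}}-1|$, or the condition $\hat\sigma(n_{k})\to 1$, translates into a bound on the operator norms $\|T_{K}^{n_{k}}\|$. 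The formula~(\ref{eq1}) in the paper controls the \emph{covariance} integrals $\int\langle x,T_{K}^{n}z\rangle\overline{\langle y,z\rangle}\,dm_{\sigma}(z)$, which govern ergodic and rigidity properties of $T_{K}$, not its norm; Kalish-type operators are in general not power-bounded, and the ``technology developed in \cite{BaGr2}'' that you invoke is not about them.

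The actual construction, described in Section~3 for the Hilbertian refinement, is of a completely different nature: one takes $T=D+B$ on $\ell_{2}$, where $D=\mathrm{diag}(\lambda_{n})$ with the $\lambda_{n}$ chosen inside the perfect set $K$ so that $d_{(n_{k})}(\lambda_{n},\lambda_{j(n)})$ is tiny, and $B$ is a weighted backward shift with very rapidly decreasing weights. The point of this set-up is precisely that one can estimate $\|T^{n_{k}}-D^{n_{k}}\|$ directly and make it as small as one likes, whence $\sup_{k}\|T^{n_{k}}\|\le 1+\delta$; the backward shift is what produces a continuum of additional unimodular eigenvectors beyond the $\lambda_{n}$'s. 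Your perfect-set step survives, but you need to feed it into this $D+B$ machine rather than into a Kalish operator.
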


Several examples of non-Jamison sequences were given in \cite{BaGr} and \cite{BaGr2}. Among these are the sequences $(n_{k})_{k\ge 0}$ such that $\frac{n_{k+1}}{n_{k}}\to +\infty $, the sequences  $(n_{k})_{k\ge 0}$ such that $n_{k}|n_{k+1}$ for each $k$ and $\limsup_{k\to+\infty }\frac{n_{k+1}}{n_{k}}=+\infty $, etc. In all these cases (and in general, when $(n_{k})_{k\ge 0}$ is not a \js), there exists a \sep\ $X$ and $T\in \mathcal{B}(X)$ such that $\sup_{k\ge 0} ||T^{n_{k}}||<+\infty $ and $\sigma  _{p}(T)\cap\T$ is uncountable. It was recently shown in \cite{EG} that one could always take $X$  to be a Hilbert space, and, moreover, that the \op\ $T$ could be constructed in such a way that it defines  a \wmx\ \mpt\ of $(H, \mathcal{B},m)$ for some \nd\ \ga\ \mea\ $m$. This can be summarized as follows:

\begin{theorem}\label{th0000}\cite{EG}
  Let $(n_{k})_{k\ge 0}$ be a strictly increasing sequence of integers such that $n_{0}=1$. The following assertions are equivalent:
  \begin{itemize}
   \item [(1)] $(n_{k})_{k\ge 0}$ is not  a Hilbertian \js, i.e. there exists a bounded linear \op\ $T$ on a complex \sep\ Hilbert space $H$ such that $\sup_{k\ge 0}||T^{n_{k}}||<+\infty $ and $T$ defines a \wmx\ \mpt\ of $(H, \mathcal{B},m)$ for some \nd\ \ga\ \mea\ $m$ (in particular $\sigma  _{p}(T)\cap\T$ is uncountable);
   
   \item [(2)] for any $\varepsilon >0$ there exists a
  $ \lambda \in\T\setminus\{1\}$ such that $\sup_{k\ge 0}|\lambda ^{n_{k}}-1|< \varepsilon $.
  \end{itemize}
\end{theorem}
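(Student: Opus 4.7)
My plan is to prove the two implications separately; $(1) \Longrightarrow (2)$ falls out of Theorems \ref{th0} and \ref{th000}, while $(2) \Longrightarrow (1)$ requires actually constructing the operator and carries essentially all the difficulty.

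For $(1) \Longrightarrow (2)$, I argue by contrapositive. If (2) fails, there is an $\varepsilon > 0$ with $\sup_{k} |\lambda^{n_k} - 1| \geq \varepsilon$ for every $\lambda \in \T \setminus \{1\}$, and Theorem \ref{th000} then says $(n_k)$ is a \js. But any $T$ satisfying (1) necessarily has uncountable unimodular point spectrum: if $\sigma_p(T) \cap \T$ were countable, one could take $\Delta = \sigma_p(T) \cap \T$ in Theorem \ref{th0}(1), forcing the span of $\ker(T - \lambda)$ with $\lambda \in \T \setminus \Delta$ to be $\{0\}$, contradicting non-degeneracy of $m$. Combined with $\sup_k \|T^{n_k}\| < \infty$, this violates the Jamison property, yielding (2).

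For $(2) \Longrightarrow (1)$, I would build the operator in two stages. First, I use (2) to produce a continuous probability measure $\sigma$ on $\T$ with $\widehat{\sigma}(n_k) \to 1$: choose $\lambda_j \in \T \setminus \{1\}$ with $\sup_k |\lambda_j^{n_k} - 1| \leq 2^{-j}$, and build $\sigma$ as, say, a Riesz-type product adapted to the $\lambda_j$, or as a suitable Bernoulli convolution supported on a Cantor-like perfect set, arranging simultaneously continuity of $\sigma$ and the Fourier condition $\widehat{\sigma}(n_k) \to 1$. Second, I realize $\sigma$ through a Hilbert space operator. The natural first candidate is the Kalish-type operator $T_K$ on $H_K$ (with $K = \supp(\sigma)$), equipped with the canonical Gaussian measure $m$ associated to the eigenvector field $\lambda \mapsto \chi_\lambda$ and $\sigma$: weak mixing of $T_K$ with respect to $m$ follows from Theorem \ref{th0} and continuity of $\sigma$, and identity \eqref{eq1} gives rigidity of the Koopman operator $U_{T_K}$ along $(n_k)$.

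The hard part, and where I expect the bulk of the work to lie, is securing the additional requirement $\sup_k \|T_K^{n_k}\|_{H_K \to H_K} < \infty$: rigidity of $U_{T_K}$ on $L^2(H_K, m)$ is strictly weaker than partial power-boundedness of $T_K$ itself on $H_K$, and the bare Kalish operator is a priori not partially power-bounded. I would attempt to arrange this either by refining the choice of $\sigma$ (concentrating its mass tightly on arcs very close to $1$ with geometry matched to the growth of $(n_k)$), or by modifying the eigenvector field $E(\lambda)$ to damp the non-normal behaviour of $T_K$, and then estimating $\|T_K^{n_k}\chi_\lambda\|$ directly on the arc-indicator eigenvectors before extending to $H_K$ by density. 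Reconciling partial power-boundedness with weak mixing of the associated Gaussian system is the technical core of the construction.
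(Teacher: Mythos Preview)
The paper does not itself prove this theorem; it is quoted from \cite{EG}. However, the construction behind $(2)\Longrightarrow(1)$ is described in Section~3, and it is entirely different from what you propose.

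Your argument for $(1)\Longrightarrow(2)$ via Theorems~\ref{th0} and~\ref{th000} is correct.

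For $(2)\Longrightarrow(1)$, the Kalish route is the wrong approach, and the obstacle you flag is genuine rather than a mere technicality. The operator $T_K=M-J$ is highly non-normal; its powers involve iterated integrals, and there is no mechanism by which $\widehat{\sigma}(n_k)\to 1$ (or any refinement of the support of $\sigma$, or of the eigenvector field) would force $\sup_k\|T_K^{n_k}\|_{H_K}<\infty$. Concentrating $\sigma$ near $1$ controls the spectrum of $T_K$ but not its resolvent growth or its powers, and rigidity of $U_{T_K}$ on $L^2(H_K,m)$ simply does not transfer to partial power-boundedness of $T_K$ on $H_K$.

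The construction in \cite{EG}, as the paper recalls in Section~3, goes in the opposite direction: one builds $T=D+B$ directly on $\ell_2(\N)$, with $D$ a unitary diagonal operator (entries $\lambda_n\in\T$) and $B$ a weighted backward shift with very small weights $\alpha_n$. Partial power-boundedness is then almost automatic, since $\|D^{n_k}\|=1$ and $B$ is a tiny perturbation; the real work is arranging the $\lambda_n$'s so that $T$ has a perfectly spanning family of unimodular eigenvectors and hence satisfies condition~(1) of Theorem~\ref{th0}. The key tool is the metric $d_{(n_k)}(\lambda,\mu)=\sup_k|\lambda^{n_k}-\mu^{n_k}|$ on $\T$: condition~(2) says precisely that $1$ is not isolated for this metric, from which one extracts a $d_{(n_k)}$-perfect subset $K\subseteq\T$ containing $1$. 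The $\lambda_n$'s are chosen inductively inside $K$, each $d_{(n_k)}$-close to a previously chosen one according to a prescribed tree structure, which simultaneously keeps $\sup_k\|T^{n_k}-D^{n_k}\|$ as small as one likes and produces the required eigenvectors.
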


As can easily be guessed, the partial power-boundedness condition gives us a non-recurrence property, and its interest here is clear. We now have all our tools in hand for the proofs of Theorems \ref{th1}, \ref{th2} and \ref{th3}.

\section{Proof of Theorem \ref{th1}}

We are going to prove that if $(n_{k})_{k\ge 0}$  is a non-\js\ with $n_{0}=1$ and if $\lambda _{0}\in\T$ is such that $\inf_{k\ge 0}|\lambda _{0}^{n_{k}}-1| >0$, then there exists a bounded linear \op\ on a \sep\ complex Hilbert space $H$ which is \wmx\ and \nrect\ \wrt\  the set $\{n_{k}\}$. In spirit this statement follows from Theorem \ref{th0000}, 
but one needs to use some fine aspects of the construction in \cite{EG} in order to obtain it.
\par\smallskip
 The proof of \cite{EG} yields for any fixed number $\delta >0$ an \op\ $T\in \mathcal{B}(H)$ such that $\sup_{k\ge 0}||T^{n_{k}}||<1+\delta $ and $T$ is a \wmx\ transformation of $(H, \mathcal{B},m)$ for some \nd\ \ga\ \mea\ $m$. The \op\ is constructed as a sum 
$T=D+B$
of a diagonal \op\ $D$ and a weighted \bs\ $B$ on the space $\ell_{2}(\N)$ where:

$\bullet$ $D=\textrm{diag}(\lambda _{n} \textrm{ ; } n\ge 1)$ is diagonal \wrt\ the canonical basis $(e_{n})_{n\ge 1}$ of $\ell_{2}(\N)$, and the $\lambda _{n}$'s are distinct unimodular coefficients;

$\bullet$ $B$ is defined as a weighted \bs\ with $Be_{1}=0$ and $Be_{n}=\alpha _{n-1}e_{n-1}$ for $n\ge 2$, and $(\alpha _{n})_{n\ge 1}$ is a very quickly decreasing sequence of positive numbers.

 The construction uses an auxiliary function $j:\{2, +\infty \}\rightarrow
\{1,+\infty \}$ which has the properties that $j(n)<n$ for each $n$, $j(2)=1$, and $j$
takes each value $r\ge 1$ infinitely often. The elements $\lambda _{n}$, $n\ge 1$, are constructed by induction on $n$, and  the crucial tool for this construction is a distance $d_{(n_{k})}$ on the unit circle which is associated to the sequence $(n_{k})_{k\ge 0}$ in the following way: for $\lambda ,\mu \in\T$, $$d_{(n_{k})}(\lambda ,\mu )=
\sup_{k\ge 0}|\lambda ^{n_{k}}-\mu ^{n_{k}}|.$$ Since $n_{0}=1$, $d_{(n_{k})}$ is indeed a distance on $\T$. Now, the assumption that $(n_{k})_{k\ge 0}$ is not a \js\ implies that there exists an uncountable subset $K$ of $\T$ containing the point $1$ such that $(K, d_{(n_{k})})$ is perfect. Then the construction of the diagonal coefficients $\lambda_{n}$ goes as follows: one first chooses $\lambda _{1}\in K$. Then if $\delta $ is a positive number, one proves that there exists a sequence $(\varepsilon _{n})$
of positive numbers going to zero very quickly such that if for each $n \ge 2$ we have $d_{(n_{k})}(\lambda _{n},\lambda _{j(n)})<\varepsilon _{n}$, then for a suitable choice of the sequence of weights $(\alpha _{n})$ the  \op\ $T=D+B$ is such that $\sup_{k\ge 0}||T^{n_{k}}-D^{n_{k}}||<\delta $, and is \wmx. Observe also that $T$ is invertible if the weights $\alpha _{n}$ are sufficiently small.
\par\smallskip
We now claim that we can ensure that $\sup_{k}||T^{n_{k}}-I||<2\delta $. Indeed we have for each $k\ge 1$ that
$||T^{n_{k}}-I||\le ||T^{n_{k}}-D^{n_{k}}||+||D^{n_{k}}-I||$, and $||D^{n_{k}}-I||=\sup_{n\ge 1}|\lambda _{n}^{n_{k}}-1|$.
Hence $\sup_{k}||D^{n_{k}}-I||=\sup_{k}\sup_{n}|\lambda _{n}^{n_{k}}-1|=\sup_{n}d_{(n_{k})}(\lambda _{n},1)$.
Now $$d_{(n_{k})}(\lambda _{n},1)\le d_{(n_{k})}(\lambda _{n},\lambda _{j(n)})+d_{(n_{k})}(\lambda _{j(n)},\lambda _{j\circ j (n)})+\dots + d_{(n_{k})}(\lambda _{j^{[p-1]}(n)},\lambda _{j^{[p]}(n)}),$$ where $j^{[k]}$ denotes the composition of $j$ with itself $k$ times and $p$ is the unique integer such that $j^{[p-1]}(n)>1$ and $j^{[p]}(n)=1$ (recall that $j(2)=1$). Hence
$$d_{(n_{k})}(\lambda _{n},1)\le \varepsilon _{n}+\varepsilon _{j(n)}+\ldots + \varepsilon _{j^{[p-1]}(n)}\le \sum_{n\ge 2}\varepsilon _{n}.$$
If we take the $\varepsilon _{n}$'s sufficiently small, we can ensure that $\sum_{n\ge 2}\varepsilon _{n}$ is less than any prescribed positive number, for instance less than $\delta $. So we get that $\sup_{k}||T^{n_{k}}-I||<2\delta $.
\par\smallskip
Going back to our initial assumption, we know that there exists $\lambda _{0}\in\T$ such that $\inf_{k\ge 0}|\lambda _{0}^{n_{k}}-1|=\delta  >0$. Let
 $T\in \mathcal{B}(H)$ be as above with $\sup_{k\ge 0}||T^{n_{k}}-I||<\frac{\delta }{2}$,
and consider the \op\ $S=\lambda _{0}T$. 
By construction, $T$ is such that for any countable subset $\Delta  $ of $\T$, the linear space spanned by the kernels
$\ker(T-\lambda)$, $ \lambda \in\T\setminus \Delta  $, is dense in $H$. It is obvious that $\lambda _{0}T$ satisfies the same property, and so by Theorem \ref{th0} $S$ is \wmx\ \wrt\ some \nd\ \ga\ \mea\ $m$. For $\gamma \in (0,1)$, let $U_{\gamma }=B({e_{1},\gamma })$ be the open ball of $H$ centered at the vector $e_{1}$ and of radius $\gamma $. For $u\in U_{\gamma }$ we have for all $k\ge 1$
\begin{eqnarray*}
||S^{n_{k}}u-u||&=&||\lambda _{0}^{n_{k}}T^{n_{k}}u-u||=||\lambda _{0}^{n_{k}}(T^{n_{k}}u-u)+(\lambda _{0}^{n_{k}}-1)u||\\
&\ge&|\lambda _{0}^{n_{k}}-1|\,||u||-\frac{\delta }{2} ||u||\\
&\ge& \delta (1-\gamma )-\frac{\delta }{2}(1+\gamma ) =\frac{\delta }{2}-\frac{3\delta}{2}\gamma>2\gamma 
\end{eqnarray*}
if $\gamma $ is sufficiently small. Hence $S^{n_{k}}U_{\gamma }\cap U_{\gamma }=\varnothing$ if $\gamma $ is sufficiently small, and so $\{n_{k}\}$ is a \nrec\ set for the \op\ $S$.
\par\smallskip
If $(n_{k})_{k\ge 0}$ is a lacunary sequence, a result of de Mathan \cite{dM} and Pollington \cite{P} (see also 
\cite{K}) states that there exists indeed a $\lambda _{0}\in\T$ such that $\inf_{k\ge 0}|\lambda _{0}^{n_{k}}-1| >0$. So any lacunary non-\js\ is a \nrec\ set for some \wmx\ \lds. In particular, thanks to a result which we recalled in Section $2$, this is the case if $\frac{n_{k+1}}{n_{k}}$ tends to infinity as $k$ tends to infinity. Theorem \ref{th1} is proved.

\begin{remark}\label{rem1}
 It is clearly not true that any non-\js\ yields a \nrec\ set for some \mea-preserving \ds. Indeed, if $(p_{j})_{j\ge 1}$ is a sequence of integers growing very fast, it is not difficult to see (thanks to Theorem \ref{th00}) that the sequence $(n_{k})_{k\ge 0}$  defined by  $$\{n_{k}\}=\bigcup_{k\ge 1}\{p_{j},2p_{j},\ldots, jp_{j}\}$$ is a non-\js. But $\{n_{k}\}$ is a \rec\ set, and of course there exists no element $\lambda _{0}\in\T$ such that $\inf_{k}|\lambda _{0}^{n_{k}}-1|>0$.
\end{remark}

\begin{remark}\label{rem2}
 In view of the results of \cite{BDLR}, one might also wonder whether it is true that whenever $(n_{k})_{k\ge 0}$ is a non-\js, the set $\{n_{k}-1\}$ is a \nrec\ set for some \wmx\ \lds. This is not the case: consider, as in Remark \ref{rem1} above, the set $$\{n_{k}\}=\bigcup_{k\ge 1}\{p_{j}+1,2p_{j}+1,\ldots, jp_{j}+1\},$$ where  $(p_{j})_{j\ge 1}$ is a rapidly growing sequence of integers. Then $(n_{k})$ is a non-\js, but $\{n_{k}-1\}$ is a recurrence set. Observe that $\{n_{k}\}$ itself is a \nrec\ set for some \wmx\ \lds. One can show, using the same kind of arguments as in the proof of Theorem \ref{th1}, that if $(n_{k})_{k\ge 0}$ is a non-\js\ and if there exists for each $\varepsilon >0$ a $\lambda \in\T$ such that $\sup_{k}|\lambda ^{n_{k}}-1|<\varepsilon $ and $|\lambda ^{n_{k}}-1|\to 0$, then for every $p\in\Z\setminus\{0\}$ the set $\{n_{k}-p\}\cap\N$ is a \nrec\ set for some \wmx\ \lds.
\end{remark}

\section{Proof of Theorem \ref{th2}}

Let $(n_{k})_{k\ge 0}$ be a sequence of integers such that $n_k|n_{k+1} $ for each $k \ge 0$. It is proved in \cite{BDLR} and \cite{EG} that
$(n_{k})_{k\ge 0}$ is a \rs\ for some \wmx\ \ds. As recalled in Section $2$, this is equivalent to saying that there exists a 
continuous \proba\ \mea\ $\sigma$ on $\T$ such that $\hat{\sigma}(n_k)\to 1$ as $n_k\to +\infty$. When $n_k|n_{k+1} $ for each $k \ge 0$ such 
\mea s do exist (they can be constructed as infinite convolutions of some suitable discrete \mea s), and one can even control the rate at
which the Fourier coefficients $\hat{\sigma}(n_k)$ tend to $ 1$. The following fact, due to J.-P. Kahane and contained in \cite{EG}, is the crux of the proof
of Theorem \ref{th2}.

\begin{lemma}\label{lem1}\cite{EG}
 Let $(n_{k})_{k\ge 0}$ be a sequence of integers such that $n_k|n_{k+1} $ for each $k \ge 0$. For any decreasing sequence
 $(a_k)_{k\ge 0}$ of positive numbers going to zero as $k$ goes to infinity and such that $\sum_{k\ge 0}a_k=+\infty$, there exists a 
continuous \proba\ \mea\ $\sigma$ on $\T$ such that $$|\hat{\sigma}(n_k)- 1|\le a_k \quad \textrm{ for all } k\ge 0.$$
\end{lemma}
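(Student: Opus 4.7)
The plan is to construct $\sigma$ as an infinite convolution $\sigma = \mu_0 * \mu_1 * \cdots$ of simple two-point probability measures on $\T$ adapted to the divisibility structure $n_k\mid n_{k+1}$. Setting $\omega_k = e^{2\pi i/n_{k+1}}$, I would take
\[
\mu_k = (1-b_k)\,\delta_1 + b_k\,\delta_{\omega_k},
\]
for real weights $b_k\in[0,1]$ to be determined; a direct computation gives $\hat\mu_k(n) = 1 - b_k(1 - e^{-2\pi i n/n_{k+1}})$. The divisibility hypothesis ensures $n_{k+1}\mid n_j$ for $j>k$, whence $\hat\mu_k(n_j)=1$ in that range, while for $j\le k$ the elementary estimate $|1-e^{-i\theta}|\le|\theta|$ yields $|1-\hat\mu_k(n_j)|\le 2\pi b_k n_j/n_{k+1}$. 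Consequently, assuming the convolution converges,
\[
|1-\hat\sigma(n_j)|\le\sum_{k\ge j}|1-\hat\mu_k(n_j)|\le 2\pi n_j\sum_{k\ge j}\frac{b_k}{n_{k+1}}.
\]

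The strategy is to choose the $b_k$'s so that (i) the above tail sum is at most $a_j$ for every $j$, and simultaneously (ii) $\sum_k b_k=+\infty$. Writing $q_k := n_{k+1}/n_k$ (which we may assume is $\ge 2$ after removing any repeated terms), a natural candidate is $b_k := (q_k a_k - a_{k+1})/(2\pi)$, clipped to $[0,1]$ where necessary: the tail in (i) then telescopes cleanly to $a_j/(2\pi n_j)$, and since $q_k\ge 2$ and $(a_k)$ decreases one has $b_k\ge a_k/(2\pi)$, so condition (ii) follows from the hypothesis $\sum_k a_k = +\infty$.

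The principal obstacle is the continuity of $\sigma$. Each factor $\mu_k$ is purely atomic, and one must rule out the a priori possibility that $\sigma$ is itself atomic. By the Jessen--Wintner law of pure types, $\sigma$ is of pure type, either purely discrete or purely continuous, and a direct Borel--Cantelli argument on the independent two-point factors shows that the atomic alternative would force $\sum_k b_k < +\infty$---precisely what our choice of $b_k$ excludes. This is exactly where the hypothesis $\sum a_k = +\infty$ enters essentially: it provides just enough room to inflate $\sum_k b_k$ to infinity without disturbing the Fourier bound (i), and thereby secures the continuity of $\sigma$. The remaining verifications---convergence of the infinite convolution, precise numerical constants, and the handling of the few indices where the clipping of $b_k$ may slightly weaken the telescoping---are routine.
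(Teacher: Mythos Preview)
The paper does not actually prove Lemma~\ref{lem1}; it is quoted from \cite{EG} and attributed there to J.-P.~Kahane. Your construction via an infinite convolution of two-point measures adapted to the divisibility structure is precisely the Kahane-type argument, and the telescoping choice $b_k=(q_k a_k-a_{k+1})/(2\pi)$ is a neat way to hit the target bound exactly.

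There is, however, one genuine slip hiding behind the word ``routine''. If you clip $b_k$ to $[0,1]$, it may happen that $b_k\ge 1$ for \emph{all} large $k$ (take for instance $a_k=1/\log(k+2)$ and $q_k=k^{2}$, so that $q_k a_k\to\infty$). After clipping you then have $\mu_k=\delta_{\omega_k}$ for all large $k$, the tail of the convolution is a Dirac mass, and $\sigma$ is purely atomic---exactly what you are trying to exclude. Relatedly, your Borel--Cantelli sketch is not quite the right tool: the second Borel--Cantelli lemma tells you that infinitely many factors are non-trivial, but that alone does not rule out atoms when the $b_k$'s accumulate at $1$.

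Both issues disappear with a one-line fix: clip to $[0,\tfrac12]$ instead. Clipping downward only improves the Fourier bound, and since $a_k\to 0$ you still have $b_k\ge a_k/(2\pi)$ eventually, so $\sum_k b_k=\infty$ and in fact $\sum_k\min(b_k,1-b_k)=\sum_k b_k=\infty$. Continuity then follows cleanly: the partial convolutions $\mu_0*\cdots*\mu_{N-1}$ are supported on $2^{N}$ \emph{distinct} points (the map $(\epsilon_k)\mapsto\sum_{k<N}\epsilon_k/n_{k+1}$ is injective because $1/n_{k+1}>\sum_{j>k}^{N-1}1/n_{j+1}$), with maximal atom $\prod_{k<N}(1-b_k)\to 0$; hence $\sigma(\{\lambda\})\le\max_\mu(\mu_0*\cdots*\mu_{N-1})(\{\mu\})\to 0$ for every $\lambda$. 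With this correction your argument is complete.
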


We will also need the following elementary fact:

\begin{fact}\label{fact1}
 There exist two sequences $(a_k)_{k\ge 0}$ and $(b_k)_{k\ge 0}$ of positive real numbers and a partition of $\N$ in two infinite sets
 $A$ 
and $B$ such that the following properties hold true:
\begin{itemize}
 \item [(1)] the sequences $(a_k)_{k\ge 0}$ and $(b_k)_{k\ge 0}$ decrease to zero, and $0<a_k, b_k<1$ for each $k\ge 0$;

\item [(2)] the series $\sum_{k\ge 0}a_k$ and $\sum_{k\ge 0}b_k$ are divergent;

\item [(3)] the series $\sum_{k\in A}a_k^{\frac{1}{3}}$ and $\sum_{k\in B}b_k^{\frac{1}{3}}$ are convergent.
\end{itemize}
\end{fact}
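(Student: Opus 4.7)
The plan is to let $a_k$ and $b_k$ be step functions that are constant on successive intervals $B_1, A_1, B_2, A_2, \ldots$, to set $A=\bigcup_j A_j$ and $B=\bigcup_j B_j$, and to choose the block lengths $\ell_j^B, \ell_j^A$ and the constant values $\alpha_j^B, \alpha_j^A$ (of $a_k$) and $\beta_j^B, \beta_j^A$ (of $b_k$) so that all three conditions are met. The monotonicity required in (1) then becomes the interleaved decrease $\alpha_1^B \ge \alpha_1^A \ge \alpha_2^B \ge \ldots$, and similarly for $\beta$.

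A first, structural, observation is that since $\alpha_j^A \le 1$ one has $\ell_j^A \alpha_j^A \le \ell_j^A (\alpha_j^A)^{1/3}$, so the convergence of $\sum_A a_k^{1/3}$ in (3) forces $\sum_A a_k < +\infty$; hence the divergence of $\sum a_k$ in (2) must come entirely from $B$, i.e.\ $\sum_j \ell_j^B \alpha_j^B = +\infty$, and symmetrically $\sum_j \ell_j^A \beta_j^A = +\infty$. I normalize by imposing $\ell_j^B \alpha_j^B = \ell_j^A \beta_j^A = 1$ for every $j$, so each round contributes exactly $1$ to each of $\sum a_k$ and $\sum b_k$; conditions (3) then reduce to requiring that $\sum_j (\alpha_j^A)^{1/3}/\beta_j^A < +\infty$ and $\sum_j (\beta_j^B)^{1/3}/\alpha_j^B < +\infty$.

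Combined with the monotonicity chain, this is a tight but solvable system. A concrete realisation is to pick a rapidly growing $p_j$, say $p_j = 13^j$, and to set $\alpha_j^B = 2^{-p_j}$, $\alpha_j^A = 2^{-p_{j+1}}$, $\beta_j^A = \beta_j^B = 2^{-4 p_j}$, with corresponding lengths $\ell_j^B = 2^{p_j}$ and $\ell_j^A = 2^{4 p_j}$. A direct calculation then shows that $\ell_j^A (\alpha_j^A)^{1/3} = \ell_j^B (\beta_j^B)^{1/3} = 2^{-p_j/3}$, which is clearly summable; the monotonicity of $(a_k)$ and $(b_k)$ reduces to the elementary inequalities $p_j \le p_{j+1}$ and $4p_j \le 4p_{j+1}$, and the values are visibly in $(0,1)$ and tend to $0$.

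The main obstacle is precisely the tension between (1) and (3). Chaining the summability-forced relations $\alpha_j^A \ll (\beta_j^A)^3$ and $\beta_j^B \ll (\alpha_j^B)^3$ through the monotonicity inequalities $\alpha_{j+1}^B \le \alpha_j^A$ and $\beta_{j+1}^B \le \beta_j^A$ yields $\alpha_{j+1}^B \ll (\alpha_j^B)^9$, which dictates a super-exponential decay of the values and correspondingly super-exponential growth of the block lengths; once one accepts that the system naturally lives at this scale, any choice $p_j = c^j$ with $c$ sufficiently large (one checks that $c>12$ suffices with the normalization above) closes the argument, and the rest is bookkeeping.
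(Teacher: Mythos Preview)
Your proof is correct and follows essentially the same block-alternating strategy as the paper: both make $(a_k)$ and $(b_k)$ step functions on consecutive blocks, take $A$ and $B$ to be the unions of alternate blocks, and arrange that on $B$-blocks $a_k$ is moderate (forcing $\sum a_k=+\infty$) while on $A$-blocks it is tiny (forcing $\sum_{A} a_k^{1/3}<+\infty$), and symmetrically for $b_k$. The only difference is cosmetic: the paper builds the blocks by a greedy induction (first pick the block length to make one partial sum at least $\tfrac12$, then shrink the other sequence on that block), whereas you write down explicit closed-form values and lengths and verify the required inequalities directly.
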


\begin{proof}[Proof of Fact \ref{fact1}]
 We construct by induction a very quickly increasing sequence
 of integers  $(p_n)_{n\ge 0}$ and blocks $(a_k)_{k=p_{n-1}+1,\ldots, p_n}$ and $(b_k)_{k=p_{n-1}+1,\ldots, p_n}$ in such a way that
$$\sum_{k=p_{n-1}+1}^{p_n}a_k\ge \frac{1}{2}\quad \textrm{ and } \quad\sum_{k=p_{n-1}+1}^{p_n}b_k^{\frac{1}{3}}\le 2^{-n}\quad 
\textrm{ for each odd integer } n $$
and
$$\sum_{k=p_{n-1}+1}^{p_n}a_k^{\frac{1}{3}}\le 2^{-n}\quad \textrm{ and } \quad\sum_{k=p_{n-1}+1}^{p_n}b_k\ge \frac{1}{2}\quad 
\textrm{ for each even integer } n $$
and we set $$A=\bigcup_{n \textrm{ even}} \{p_{n-1}+1, \ldots, p_n\}\quad \textrm{ and } \quad B=\bigcup_{n \textrm{ odd}}
 \{p_{n-1}+1, \ldots, p_n\}.$$ We start with $p_0=0$, $p_1=1$, $a_1=\frac{1}{2}$ and $b_{1}=\frac{1}{8}$. Let now $r\ge 1$, and suppose that
$p_1,\ldots, p_{2r-1}$ have been constructed,  as well as $a_k$ and $b_k$ for $k\le p_{2r-1}$. We first choose $p_{2r}$ so large that
 $(p_{2r}-p_{2r-1})b_{p_{2r-1}}\ge \frac{1}{2}$, then set $b_k=b_{p_{2r-1}}$ for $p_{2r-1}+1\le k \le p_{2r}$. Then we choose,
 for $p_{2r-1}+1\le k \le p_{2r}$, $a_k$ positive and so small that $\sum_{k=p_{2r-1}+1}^{p_{2r}}a_k^{\frac{1}{3}}\le 2^{-(2r)}$, with $a_{k+1}\le a_{k}$ for each $k=p_{2r-1}+1, \ldots, p_{2r}-1$.
 The construction of $p_{2r+1}$ and of the numbers $a_k$ and $b_k$ for $p_{2r}+1\le k \le p_{2r+1}$ is exactly the same, permuting the
 roles of $a_k$ and $b_k$.
\end{proof}

We will have to consider separately two cases in the proof of Theorem \ref{th2}: the case where $p\in\Z$ is non-zero and the case where $p=0$.
\subsection{Proof of Theorem \ref{th2} when $p\in\Z\setminus \{0\}$}
 Let $(a_{k})_{k\ge 0}$, $(b_{k})_{k\ge 0}$, $A$ and $B$ be given by Fact \ref{fact1}. Since $n_{k}|n_{k+1}$ for each $k\ge 0$ and the two series $\sum a_{k}$ and $\sum b_{k}$ are divergent, there exist by Lemma \ref{lem1} two continuous \proba\ \mea s  $\sigma  $ and $\tau $ on $\T$
 such that $|\hat{\sigma  }(n_{k})-1|\le a_{k}$ and $|\hat{\tau  }(n_{k})-1|\le b_{k}$ for all $k\ge 1$. Also, the two series $\sum_{k\in A} |\hat{\sigma  }(n_{k})-1|^{\frac{1}{3}}$
and $\sum_{k\in B} |\hat{\tau  }(n_{k})-1|^{\frac{1}{3}}$ are convergent.
\par\smallskip
Let $T_{K}$ be the Kalish-type \op\ on $H_{K}$ associated to the compact set $K=supp(\sigma  )$, and $T_{L}$ the \op\ on $H_{L}$ associated to $L=supp(\tau )$ (where the symbol $\supp$ denotes the support of the \mea). Let $m_{\sigma  }$ and $m_{\tau}$ be the two \ga\ \mea s, on $H_{K}$ and $H_{L}$ respectively, defined in Section 2.2: for any $x,y\in H_{K}$
$$\int_{H_{K}}\pss{x}{z}\overline{\pss{y}{z}}dm_{\sigma  }(z)=\int_{\T}\pss{x}{E(\lambda )}\overline{\pss{y}{E(\lambda )}}d\sigma  (\lambda )$$ and for any $u,v\in H_{L}$
$$\int_{H_{L}}\pss{u}{w}\overline{\pss{v}{w}}dm_{\tau  }(w)=\int_{\T}\pss{u}{E(\lambda )}\overline{\pss{v}{E(\lambda )}}d\tau  (\lambda ),$$ where $E(\lambda )=\chi_{\lambda }$ for each $\lambda \in\T$. The \mea\ $m_{\sigma  }$ (resp. $m_{\tau}$) is \nd, \inv\ \wrt\ $T_{K}$ (resp. $T_{L}$), and $T_{K}$ (resp. $T_{L}$) is \wmx\ on $(H_{K}, \mathcal{B}_{K}, m_{\sigma  })$ (resp. on $(H_{L}, \mathcal{B}_{L}, m_{\tau  })$).
Our aim is now to show the following statement:

\begin{proposition}\label{prop1}
For any $p\in\Z\setminus\{0\}$, the set $\{n_{k}-p \textrm{ ; }k\ge 0\}\cap\N$
is a \nrec\ set for the \wmx\ system $T_{K}\times T_{L}$ on $(H_{K}\times H_{L}, \mathcal{B}_{K}\times \mathcal{B}_{L}, m_{\sigma  }\times m_{\tau})$.
\end{proposition}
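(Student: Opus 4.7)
The strategy is to exploit the partition $\N = A \cup B$ from Fact \ref{fact1}: non-recurrence at time $n_k-p$ will be witnessed through the $H_K$-factor when $k\in A$ and through the $H_L$-factor when $k\in B$. Accordingly I will build measurable sets $W_K\subset H_K$ and $W_L\subset H_L$ of positive Gaussian measure, and take $W = W_K\times W_L$ as the candidate non-recurrent product set.

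The key analytic input is a Borel--Cantelli estimate. Fix $\xi_K\in H_K$ such that $\lambda\mapsto\pss{\xi_K}{E(\lambda)}$ is non-zero $\sigma$-a.e., and set $f_K(u) = \textrm{Re}\,\pss{\xi_K}{u}$. Polarizing (\ref{eq1}) one gets
$$\int_{H_K}\bigl|\pss{\xi_K}{T_K^{n_k}u - u}\bigr|^2\, dm_\sigma(u) = \int_\T |\lambda^{n_k}-1|^2\,|\pss{\xi_K}{E(\lambda)}|^2\, d\sigma(\lambda) \le 2M_K\, a_k,$$
where $M_K = \sup_{\lambda\in K}|\pss{\xi_K}{E(\lambda)}|^2 <+\infty$. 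Chebyshev then gives $m_\sigma(|\pss{\xi_K}{T_K^{n_k}u-u}|>a_k^{1/3}) \le 2M_K\, a_k^{1/3}$, so the summability $\sum_{k\in A}a_k^{1/3}<\infty$ combined with Borel--Cantelli shows that for $m_\sigma$-a.e.\ $u$ the bound $|\pss{\xi_K}{T_K^{n_k}u-u}|\le a_k^{1/3}$ holds for all but finitely many $k\in A$. Since $T_K E(\lambda)=\lambda E(\lambda)$ with $|\lambda|=1$, the identical estimate holds with $\xi_K$ replaced by $(T_K^{-p})^*\xi_K$, and from $\pss{\xi_K}{T_K^{n_k-p}u - T_K^{-p}u}=\pss{(T_K^{-p})^*\xi_K}{T_K^{n_k}u-u}$ we deduce that the Gaussian variables
$$X_k(u) := f_K(T_K^{n_k-p}u) - f_K(T_K^{-p}u)$$
tend to zero almost surely along $k\in A$; in particular $\sup_{k\in A,\,n_k>p}|X_k(u)|<\infty$ for $m_\sigma$-a.e.\ $u$.

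We now build $W_K$. Choose $\xi_K$ moreover so that $\xi_K$ and $(T_K^{-p})^*\xi_K$ are not real-proportional (possible because $T_K^{-p}\neq I$); then $(f_K(u), f_K(T_K^{-p}u))$ is a non-degenerate two-dimensional Gaussian, so $E_0^M = \{f_K(u)\ge 2,\ f_K(T_K^{-p}u)\le -M-3\}$ has positive measure $\alpha_M>0$ for every $M>0$. Since $\sup_{k\in A,\,n_k>p}|X_k|$ is almost surely finite, Gaussian concentration (Fernique) yields $m_\sigma(\sup_{k\in A,\,n_k>p}|X_k|>M)\to 0$ as $M\to\infty$, with a Gaussian-type tail whose rate can be arranged, by scaling $\xi_K$ appropriately, to decay no more slowly than $\alpha_M$; hence for $M$ large enough,
$$m_\sigma\bigl(\sup_{k\in A,\,n_k>p}|X_k(u)|>M\bigr) \,<\, \alpha_M/2.$$
Setting $W_K := E_0^M \cap \bigl\{\sup_{k\in A,\,n_k>p}|X_k(u)|\le M\bigr\}$ then gives $m_\sigma(W_K)\ge \alpha_M/2>0$, and on $W_K$ we have $f_K(u)\ge 2$ together with $f_K(T_K^{n_k-p}u) = f_K(T_K^{-p}u) + X_k(u) \le -M-3 + M = -3$ for every $k\in A$ with $n_k>p$. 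Construct $W_L\subset H_L$ symmetrically from $\xi_L$, $f_L$, and the summability $\sum_{k\in B}b_k^{1/3}<\infty$.

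Set $W = W_K\times W_L$, which has positive $m_\sigma\times m_\tau$-measure. For any $k\ge 0$ with $n_k>p$: if $k\in A$ and a point $(u,v)$ belonged to $W\cap (T_K\times T_L)^{-(n_k-p)}W$, then $u\in W_K$ would force $f_K(T_K^{n_k-p}u)\le -3$, while $T_K^{n_k-p}u\in W_K$ would force $f_K(T_K^{n_k-p}u)\ge 2$, a contradiction; the case $k\in B$ is symmetric via $f_L$. Hence $W\cap (T_K\times T_L)^{-(n_k-p)}W=\varnothing$ for every such $k$, which proves the proposition. The main obstacle is precisely the positivity of $m_\sigma(W_K)$ against the infinite family of shifted-time constraints indexed by $A$; this is where the cube-root summability $\sum_{k\in A}a_k^{1/3}<\infty$ baked into Fact \ref{fact1} is essential, converting Kahane's rate $|\hat\sigma(n_k)-1|\le a_k$ from Lemma \ref{lem1} into Borel--Cantelli/concentration control sharp enough to balance the Gaussian tail $\alpha_M$ against $m_\sigma(\sup_k|X_k|>M)$ for an appropriate $M$.
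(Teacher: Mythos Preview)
Your overall architecture---splitting along the partition $A\cup B$ and witnessing non-recurrence in the $H_K$-factor for $k\in A$ and in the $H_L$-factor for $k\in B$---matches the paper's, and your Borel--Cantelli input is essentially equivalent to the paper's Lemma~\ref{lem3}. The gap is in the positivity of $m_\sigma(W_K)$.

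You need $m_\sigma\bigl(\sup_{k\in A,\,n_k>p}|X_k|>M\bigr)<\alpha_M/2$ for some $M$. Both sides are Gaussian tails in $M$: by Borell--TIS, $\beta_M:=m_\sigma(\sup_k|X_k|>M)\lesssim\exp\bigl(-(M-\mathrm{med})^2/(2\sigma^2)\bigr)$ with $\sigma^2=\sup_k\mathrm{Var}(X_k)$, while $\alpha_M=m_\sigma(E_0^M)$ decays like $\exp(-M^2/(2\tau_{\mathrm{eff}}^2))$ for some effective variance coming from the pair $(f_K,f_K\circ T_K^{-p})$. To get $\beta_M<\alpha_M$ for large $M$ you would need $\sigma^2<\tau_{\mathrm{eff}}^2$, and nothing in your setup forces this. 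Your appeal to ``scaling $\xi_K$'' cannot help: replacing $\xi_K$ by $c\xi_K$ scales $f_K$, $f_K\circ T_K^{-p}$ and every $X_k$ by the \emph{same} factor $c$, so the two decay rates move together and the comparison is unchanged. Concretely, the only variance bound available is $\mathrm{Var}(X_k)\le M_K\,a_k$ with $M_K=\sup_\lambda|\pss{\xi_K}{E(\lambda)}|^2$; since $a_1=\tfrac12$ in Fact~\ref{fact1}, this gives $\sigma^2\le M_K/2$, which is \emph{larger} than $\tau^2=\tfrac12\int|\pss{\xi_K}{E(\lambda)}|^2\,d\sigma$ whenever $\sigma$ is not concentrated where $|\pss{\xi_K}{E(\lambda)}|$ is maximal. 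So the step fails as written.

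The paper sidesteps this balancing problem entirely. It fixes a single open rectangle set $U=U_{a,b}^{c,d}$ \emph{independent of any threshold $M$}, proves $\sum_{k\in A}m_\sigma(U\triangle T_K^{n_k}U)<\infty$ (your Borel--Cantelli estimate, repackaged), and then uses \emph{ergodicity of $T_K^p$} to find a positive-measure $C_0\subset U$ with $m_\sigma(T_K^pC_0\cap U)=0$. Setting $C=T_K^pC_0\setminus\bigcup_{k\ge\kappa,\,k\in A}T_K^{-(n_k-p)}(T_K^{n_k}U\triangle U)$ for $\kappa$ large gives $m_\sigma(C)>0$ and $m_\sigma(C\cap T_K^{n_k-p}C)=0$ for all $k\in A$, $k\ge\kappa$; the finitely many remaining $k\in A$ are then handled by successive ergodic shrinkings. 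The point is that the convergent series lets you make the \emph{tail} $\sum_{k\ge\kappa}$ smaller than the \emph{fixed} number $m_\sigma(C_0)$---no competition of Gaussian rates is needed. Your argument is easily repaired along these lines: keep $E_0=\{f_K\ge 2,\ f_K\circ T_K^{-p}\le -4\}$ fixed, intersect with $\{|X_k|\le 1\ \forall k\in A,\,k\ge\kappa\}$ (which has measure $\to 1$ as $\kappa\to\infty$ by your Borel--Cantelli), and then treat the finitely many $k<\kappa$ via ergodicity of the powers of $T_K$.
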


\begin{proof}
Let us first observe that for any $x\in H_{K}$ and any $k\ge 1$,
\begin{eqnarray*}
\int_{H_{K}}|\pss{x}{(T_{K}^{n_{k}}-I)z}|^{2} dm_{\sigma  }(z)&=&\int_{\T} |\lambda ^{n_{k}}-1|^{2}|\pss{x}{E(\lambda )}|^{2}d\sigma  (\lambda )\\ 
&\le&2\pi\, ||x||^{2}\,\int_{\T}  |\lambda ^{n_{k}}-1|^{2}d\sigma  (\lambda )\\
&\le& 4\pi\, ||x||^{2}\, \Re e (1-\hat{\sigma  }(n_{k}))\le 4\pi \,||x||^{2}\, a_{k},
\end{eqnarray*}
and in the same way
\begin{eqnarray*}
\int_{H_{L}}|\pss{u}{(T_{L}^{n_{k}}-I)w}|^{2} dm_{\tau }(w)&=&\int_{\T} |\lambda ^{n_{k}}-1|^{2}|\pss{u}{E(\lambda )}|^{2}d\tau  (\lambda )\\ 
&\le&2\pi\, ||u||^{2}\,\int_{\T}  |\lambda ^{n_{k}}-1|^{2}d\tau  (\lambda )\\
&\le& 4\pi\, ||u||^{2}\, \Re e (1-\hat{\tau  }(n_{k}))\le 4\pi \,||u||^{2}\, b_{k}
\end{eqnarray*}
for all $u\in H_{L}$ and all $k\ge 1$.
For $x\in H_{K}$, let us denote by $f_{x}$ the function in $L^{2}(H_{K}, \mathcal{B}_{K},m_{\sigma  })$ defined by $f_{x}(z)=\pss{x}{z}$, and, for $u\in H_{L}$, by $g_{u}$ the function in $L^{2}(H_{L}, \mathcal{B}_{L},m_{\tau  })$ defined by 
$g_{u}(w)=\pss{u}{w}$. We have by the inequalities above
\begin{eqnarray}\label{eq2}
 ||f_{x}\circ T^{n_{k}}_{K}-f_{x}||^{2}\le  4\pi \,||x||^{2}\, a_{k}\quad \textrm{and}\quad 
 ||g_{u}\circ T^{n_{k}}_{L}-g_{u}||^{2}\le  4\pi \,||u||^{2}\, b_{k},
\end{eqnarray}
 and so
we obtain that for all $x\in H_{K}$ and all $u\in H_{L}$,
\begin{eqnarray*}
\sum_{k\in A} ||f_{x}\circ T^{n_{k}}_{K}-f_{x}||^{\frac{2}{3}}_{L^{2}(H_{K}, m_{\sigma  })}<+\infty 
\quad \textrm{and}\quad 
\sum_{k\in B} ||g_{u}\circ T^{n_{k}}_{L}-g_{u}||^{\frac{2}{3}}_{L^{2}(H_{L}, m_{\tau })}<+\infty.
\end{eqnarray*}
We will now need a modification of an argument of \cite{BDLR}:

\begin{lemma}\label{lem3}
Let $x_{0}$ be a non-zero vector of $H_{K}$, and let $a, b, c$ and $d$ be four real numbers with $a<b$ and $c<d$. Denote by $R_{a,b}^{c,d}$ the open rectangle of $\C$ defined by  $R_{a,b}^{c,d}=]a,b[+i]c,d[$. Then the set $$U_{a,b}^{c,d}=\{x\in H_{K}
\textrm{ ; } \pss{x_{0}}{x}\in R_{a,b}^{c,d}\}$$ is such that $m_{\sigma}(U_{a,b}^{c,d})>0$ and $\sum_{k\in A}m_{\sigma}(U_{a,b}^{c,d}\triangle T_{K}^{n_{k}}U_{a,b}^{c,d})$ is finite.
\end{lemma}

\begin{proof}[Proof of Lemma \ref{lem3}]
For simplicity, let us write $U_{a,b}^{c,d}=U$. Since $U$ is a non-empty open subset of $H_{K}$ and $m_{\sigma}$ is \nd, $m_{\sigma}(U)>0$. Our aim is now to show that there exists a positive constant $C$ such that for all $k$,
$m_{\sigma}(U\setminus T_{K}^{-n_{k}}U)\le Ca_{k}^{\frac{1}{3}}$.
Fix $\varepsilon >0$, and suppose that $x$ belongs to $U\setminus T_{K}^{-n_{k}}U$: 
we have $a<\Re e \pss{x_{0}}{x}<b$ and $c<\Im m \pss{x_{0}}{x}<d$, and either
 $$\Re e \pss{x_{0}}{T_{K}^{n_{k}}x}\le a-\varepsilon \quad \textrm{ or } \quad a-
\varepsilon<\Re e \pss{x_{0}}{T_{K}^{n_{k}}x}\le a$$
or $$ b\le \Re e \pss{x_{0}}{T_{K}^{n_{k}}x}<b+\varepsilon
\quad \textrm{ or } \quad b+\varepsilon\le \Re e \pss{x_{0}}{T_{K}^{n_{k}}x}$$
or $$\Im m \pss{x_{0}}{T_{K}^{n_{k}}x}\le c-\varepsilon \quad \textrm{ or } \quad
c-\varepsilon<\Im m \pss{x_{0}}{T_{K}^{n_{k}}x}\le c$$
or
$$d\le \Im m \pss{x_{0}}{T_{K}^{n_{k}}x}<d+\varepsilon
\quad \textrm{ or } \quad
d+\varepsilon\le \Im m \pss{x_{0}}{T_{K}^{n_{k}}x}.$$
 First observe that
$$\int_{\{x\in U \textrm{ ; } \Re e \pss{x_{0}}{T_{K}^{n_{k}}x}\le a-\varepsilon \}}
|\Re e \pss{x_{0}}{T_{K}^{n_{k}}x}-\Re e \pss{x_{0}}{x}|^{2}dm_{\sigma  }(x)$$ is bigger than $$
\varepsilon ^{2}\, m_{\sigma  }(\{x\in U \textrm{ ; } \Re e \pss{x_{0}}{T_{K}^{n_{k}}x}\le a-\varepsilon \}).$$ Thus we obtain from (\ref{eq2}) that for all $k\ge 1$,                                                                                                                                                                                                                                                                                               $$m_{\sigma  }(\{x\in U \textrm{ ; } \Re e \pss{x_{0}}{T_{K}^{n_{k}}x}\le a-\varepsilon \})\le \frac{4\pi\,||x_{0}||^{2}\,a_{k}}{\varepsilon ^{2}}\cdot$$ 
In the same way we get that the three quantities
$m_{\sigma  }(\{x\in U \textrm{ ; } \Re e \pss{x_{0}}{T_{K}^{n_{k}}x}\ge b-\varepsilon \})$,
$m_{\sigma  }(\{x\in U \textrm{ ; } \Im m \pss{x_{0}}{T_{K}^{n_{k}}x}\le c-\varepsilon \})$
and $m_{\sigma  }(\{x\in U \textrm{ ; } \Im m \pss{x_{0}}{T_{K}^{n_{k}}x}\ge d-\varepsilon \})$ are all smaller than $ {4\pi ||x_{0}||^{2}a_{k}}{\varepsilon ^{-2}}$.
 Now since $f_{x_{0}}$ has complex \ga\ distribution, $\Re e f_{x_{0}}$ and $\Im m f_{x_{0}}$ are two real variables having the same Gaussian distribution, and so   there exists a positive constant $M$ such that for any $\varepsilon \in (0,1)$  and any interval $I$ of $\R$ of length $\varepsilon$,                                                                                                                                                                                                                                                                                                                                                                                                                                                                                                                                                                                                                                                                                                                                                                                                                                                                                                                                                                                                                            $$m_{\sigma  }(\{x\in H_{K} \textrm{ ; } \Re e \pss{x_{0}}{x}\in I\})\le M\,\varepsilon $$
and
$$m_{\sigma  }(\{x\in H_{K} \textrm{ ; }  \Im m \pss{x_{0}}{x}\in I\})\le M\,\varepsilon .$$
 Since the \mea\ $\sigma  $ is $T_{K}$-\inv, 
$$m_{\sigma  }(\{x\in H_{K} \textrm{ ; }a-\varepsilon < \Re e \pss{x_{0}}{T_{K}^{n_{k}}x}\le a\})=
m_{\sigma  }(\{x\in H_{K} \textrm{ ; } a-\varepsilon < \Re e \pss{x_{0}}{x}\le a\})\le M\varepsilon$$ for all $k\ge 1$, and we have similar inequalities with the sets involving $b, c$ and $d$. 
We deduce from all this that for all $k\ge 1$ and all $\varepsilon >0$
$$m_{\sigma}({\{x\in U \textrm{ ; } \Re e \pss{x_{0}}{T_{K}^{n_{k}}x}\le a \}})\le \frac{4\pi ||x_{0}||^{2}a_{k}}{\varepsilon^{2}}+M\varepsilon,$$
and so that
\begin{eqnarray*}
 m_{\sigma  }(U\setminus T_{K}^{n_{k}}U)&\le & 4\left(\frac{4\pi ||x_{0}||^{2}a_{k}}{\varepsilon^{2}}+M\varepsilon\right)\le C\left(\frac{a_{k}}{\varepsilon ^{2}}+\varepsilon\right)
\end{eqnarray*}
for some numerical constant $C$. This being true for all $\varepsilon \in (0,1)$, we obtain by taking $\varepsilon= a_{k}^{\frac{1}{3}}$ that for all $k\ge 1$ $$m_{\sigma  }(U\setminus T_{K}^{-n_{k}}U)\le 2C\, a_{k}^{\frac{1}{3}}.$$ This implies that 
$m_{\sigma  }(U\cap T_{K}^{-n_{k}}U)\ge m_{\sigma}(U)-2C\, a_{k}^{\frac{1}{3}}$, and since
$m_{\sigma  }(T_{K}^{-n_{k}}U\triangle U)=2m_{\sigma}(U)-2m_{\sigma  }(U\cap T_{K}^{-n_{k}}U)$, we eventually obtain that $m_{\sigma  }(U\triangle T_{K}^{-n_{k}}U)=m_{\sigma  }(U\triangle T_{K}^{n_{k}}U)\le 4C\, a_{k}^{\frac{1}{3}}$.
Since the series $\sum_{k\in A}a_{k}^{\frac{1}{3}}$ is convergent, this yields that 
$$\sum_{k\in A}m_{\sigma  }(T_{K}^{n_{k}}U\triangle U)<+\infty .$$
Lemma \ref{lem3} is proved.
\end{proof}

Going back to the proof of Proposition \ref{prop1}, fix $p\in\Z\setminus\{0\}$ and let $U=U_{a,b}^{c,d}$ be one of the sets defined in Lemma \ref{lem3}. The argument
 runs now exactly as in \cite{BDLR}.

 We know that $T_{K}$ is invertible, \wmx, and so that all its powers are \erg. It follows that $T_{K}^{p}U$ is not contained in $U$ up to a set of \mea\ zero. Hence there exists a Borel subset $C_{0}$ of $U$, with $m_{\sigma  }(C_{0})>0$, such that $m_{\sigma  }(U\cap T_{K}^{p}C_{0})=0$. Let us set now, for a certain integer $\kappa $ to be fixed later on,
$$C=T_{K}^{p}C_{0}\setminus \bigcup_{k\ge \kappa, \, k\in A }T_{K}^{-(n_{k}-p)}(T_{K}^{n_{k}}U\triangle U).$$ We have
 $$m_{\sigma}(C)\ge m_{\sigma}(C_{0})-\sum_{k\ge \kappa, k\in A}m_{\sigma}(T_{K}^{n_{k}}U\triangle U).$$ Since the series on the right-hand side is convergent by Lemma \ref{lem1}, if we
 take $\kappa$ sufficiently large we have that
 $m_{\sigma}(C)>0$.
 Moreover, since $m_{\sigma  }(U\cap T_{K}^{p}C_{0})=0$,
  $C$ is contained in the complement of $U$, and $T_{K}^{n_{k}-p}C
\subseteq T_{K}^{n_{k}}C_{0}\setminus(T_{K}^{n_{k}}U\triangle U)\subseteq U$ for all $k\in A$ with $k\ge \kappa$ because $C_{0}\subseteq U$. Hence we get that for
 all $k\ge \kappa$, $k\in A$,
$m_{\sigma  }(C\cap T_{K}^{n_{k}-p}C)=0 $. In order to obtain a set $C'$ with positive \mea\   which satisfies $m_{\sigma}(C'\cap
 T_{K}^{n_{k}-p}C')=0$ for all $k\in A$, one has to use again the fact that all the powers of $T_{K}$ are ergodic: if $k_{0}$ is the smallest integer such that $n_{k_{0}}-p\ge 1$, there exists a Borel subset $C_{k_{0}}$ of $C$ with 
$m_{\sigma}(C_{k_{0}})>0$ such that $m_{\sigma}(C_{k_{0}}\cap T_{K}^{n_{k_{0}}-p}C_{k_{0}})=0$, then a Borel subset $C_{k_{0}+1}$ of $C_{k_{0}}$ with 
$m_{\sigma}(C_{k_{0}+1})>0$ such that $m_{\sigma}(C_{k_{0}+1}\cap T_{K}^{n_{k_{0}+1}-p}C_{k_{0}+1})=0$, etc. until we get a Borel subset $C_{\kappa -1}=C'$
 of $C$ such that 
$m_{\sigma}(C')>0$ and $m_{\sigma}(C'\cap T_{K}^{n_{k}-p}C')=0$ for all $k \in A$, $k\ge k_{0}$. 
\par\smallskip
We apply now exactly the same procedure to the \op\ $T_{L}$: if $u_{0}\in H_{L}$ is a non-zero vector, $a',b',c',d'\in\R$ with 
$a'<b'$ and $c'<d'$, applying Lemma \ref{lem3} to the set 
$$V_{a',b'}^{c',d'}=\{u\in H_{L}
\textrm{ ; } \pss{u_{0}}{u}\in R_{a',b'}^{c',d'}\}$$
with $T_{L}$ in place of $T_{K}$ and $m_{\tau}$ in place of $m_{\sigma}$ 
 we obtain a Borel subset
 $D'$
 of $H_{L}$ such that 
$m_{\tau}(D')>0$ and $m_{\tau}(D'\cap T_{L}^{n_{k}-p}D')=0$ for all $k \in B$ with $k\ge k_{0}$.
\par\smallskip
It is now not difficult to see that $C'\times D'$ is a set of positive \mea\ which is \nrect\ \wrt\ the set $\{n_{k}-p \textrm{ ; }
k \ge k_{0}\}\cap \N$ for the \wmx\ \op\ $T_{K}\times T_{L}$ on $H_{K}\times H_{L}$ (indeed, any union of finitely many \nrec\ sets
for \wmx\ transformations is in its turn a \nrec\ set for a \wmx\ system). This finishes the proof of Proposition \ref{prop1}. 
\end{proof}
Theorem \ref{th2} is proved in the case where
$p$ is non-zero.
\subsection{Proof of Theorem \ref{th2} in the case where $p=0$}
The case where $p$ is equal to zero is a bit different, and uses the fact that, since $n_{k}$ divides $n_{k+1}$ for each $k$, $(n_{k})_{k\ge 0}$ is in particular lacunary. Hence there exists a $\lambda _{0}\in \T$ such that for all $k\ge 0$, $|\lambda _{0}^{n_{k}}-1|\ge \delta $. Consider now, as in the proof of Theorem \ref{th1}, the \ops\ $S_{K}=\lambda _{0}T_{K}$ on $H_{K}$ and $S_{L}=\lambda _{0}T_{L}$ on $H_{L}$, where $T_{K}$ and $T_{L}$ are the \ops\ defined above. The same argument as in the proof of Theorem \ref{th1} shows that $S_{K}$ and $S_{L}$ are \wmx\ \wrt\ some \ga\ \mea s on $H_{K}$ and $H_{L}$ respectively, but we need to be a bit more precise here: we need to know that the \mea\ $m_{\sigma  }$ itself is $S_{K}$-\inv, and that $S_{K}$ defines a \wmx\ transformation of $(H_{K}, \mathcal{B}_{K}, m_{\sigma  })$ (and the same thing for the \op\ $S_{L}$ on $H_{L}$). The fact that $m_{\sigma  }$ is $S_{K}$-invariant is an immediate consequence of the fact that \ga\ \mea s are rotation-\inv. In order to check that $S_{K}$ is \wmx\ \wrt\ $m_{\sigma  }$, it suffices to show (see \cite{BayGr1} or \cite{BM} for details) that for all $x,y\in H_{K}$,
$$\frac{1}{N}\sum_{n=1}^{N}\left|\int_{H_{K}}\pss{x}{S_{K}^{n}z}\overline{\pss{y}{z}}dm_{\sigma  }(z)\right|^{2}\to 0 \quad \textrm{as } N\to +\infty .$$
But
\begin{eqnarray*}
 \left|\int_{H_{K}}\pss{x}{S_{K}^{n}z}\overline{\pss{y}{z}}dm_{\sigma  }(z)\right|&=&
 \left|\int_{H_{K}}\lambda _{0}^{n}\pss{x}{T_{K}^{n}z}\overline{\pss{y}{z}}dm_{\sigma  }(z)\right|\\
 &=& \left|\int_{H_{K}}\pss{x}{T_{K}^{n}z}\overline{\pss{y}{z}}dm_{\sigma  }(z)\right|
\end{eqnarray*}
and since $T_{K}$ is \wmx\  the conclusion follows.
\par\smallskip
We are now going to prove the following proposition:

\begin{proposition}\label{prop3}
The set $\{n_{k}\}$
is a \nrec\ set for the \wmx\ system $S_{K}\times S_{L}$ on $(H_{K}\times H_{L}, \mathcal{B}_{K}\times \mathcal{B}_{L}, m_{\sigma  }\times m_{\tau})$.
\end{proposition}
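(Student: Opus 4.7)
The strategy is to adapt the proof of Proposition~\ref{prop1} by using the unimodular twist $\lambda_{0}$ to convert the rigidity of $T_{K}$ along the block $A$ into non-recurrence of $S_{K}$, and similarly the rigidity of $T_{L}$ along $B$ into non-recurrence of $S_{L}$; since $\N=A\cup B$, a suitable product set will then be non-recurrent under $S_{K}\times S_{L}$ along the full sequence $\{n_{k}\}$. The new geometric ingredient is to choose the test rectangles far from the origin of $\C$ so that rotation by any $\lambda_{0}^{n_{k}}$ sends them to a disjoint set.

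Concretely, fix a non-zero $x_{0}\in H_{K}$ and an open rectangle $R=R_{a,b}^{c,d}$ contained in an annulus $\{z\in\C : D\le|z|\le D'\}$ with $\delta D$ strictly larger than the diameter of $R$. For every $\lambda\in\T$ with $|\lambda-1|\ge\delta$ and every $w,w'\in R$ one has $|\lambda w-w'|\ge|\lambda-1|\,|w|-|w-w'|\ge\delta D-\mathrm{diam}(R)>0$, so $\lambda R\cap R=\varnothing$; in particular $\lambda_{0}^{\pm n_{k}}R\cap R=\varnothing$ for every $k\ge 0$. Set $U=\{x\in H_{K} : \pss{x_{0}}{x}\in R\}$, which has positive $m_{\sigma}$-measure since $m_{\sigma}$ is non-degenerate. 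The key observation is that for every $k\in A$,
$$U\cap S_{K}^{-n_{k}}U\subseteq U\setminus T_{K}^{-n_{k}}U,\qquad\text{hence}\qquad m_{\sigma}(U\cap S_{K}^{-n_{k}}U)\le C\,a_{k}^{\frac{1}{3}}.$$
Indeed, $x\in U\cap S_{K}^{-n_{k}}U$ means $\pss{x_{0}}{x}\in R$ and $\pss{x_{0}}{S_{K}^{n_{k}}x}=\lambda_{0}^{n_{k}}\pss{x_{0}}{T_{K}^{n_{k}}x}\in R$, so $\pss{x_{0}}{T_{K}^{n_{k}}x}\in\lambda_{0}^{-n_{k}}R$; this set is disjoint from $R$, whence $T_{K}^{n_{k}}x\notin U$, and the bound follows from Lemma~\ref{lem3} applied to $T_{K}$. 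Picking analogously a non-zero $u_{0}\in H_{L}$ and a rectangle $R'$ with the same disjointness property, and setting $V=\{u\in H_{L} : \pss{u_{0}}{u}\in R'\}$, we likewise get $m_{\tau}(V\cap S_{L}^{-n_{k}}V)\le C'\,b_{k}^{\frac{1}{3}}$ for every $k\in B$.

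Because $\sum_{k\in A}a_{k}^{\frac{1}{3}}$ and $\sum_{k\in B}b_{k}^{\frac{1}{3}}$ are convergent by Fact~\ref{fact1}(3), one can choose $\kappa$ so large that $\sum_{k\in A,\,k\ge\kappa}m_{\sigma}(U\cap S_{K}^{-n_{k}}U)<\frac{1}{2}m_{\sigma}(U)$ and $\sum_{k\in B,\,k\ge\kappa}m_{\tau}(V\cap S_{L}^{-n_{k}}V)<\frac{1}{2}m_{\tau}(V)$, then set $C=U\setminus\bigcup_{k\in A,\,k\ge\kappa}S_{K}^{-n_{k}}U$ and $D=V\setminus\bigcup_{k\in B,\,k\ge\kappa}S_{L}^{-n_{k}}V$. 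These are Borel sets of positive measure, and by construction $m_{\sigma}(C\cap S_{K}^{-n_{k}}C)=0$ for every $k\in A$ with $k\ge\kappa$, while $m_{\tau}(D\cap S_{L}^{-n_{k}}D)=0$ for every $k\in B$ with $k\ge\kappa$. The finitely many small indices are absorbed by iteratively shrinking $C$ (resp.\ $D$) via ergodicity of every non-zero power of $S_{K}$ (resp.\ $S_{L}$), exactly as at the end of the proof of Proposition~\ref{prop1}, producing $C'\subseteq C$ and $D'\subseteq D$ of positive measure with $m_{\sigma}(C'\cap S_{K}^{-n_{k}}C')=0$ for every $k\in A$ and $m_{\tau}(D'\cap S_{L}^{-n_{k}}D')=0$ for every $k\in B$.

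Since $\N=A\cup B$, the set $C'\times D'$ has positive $m_{\sigma}\times m_{\tau}$-measure and, for every $k\ge 0$,
$$(C'\times D')\cap(S_{K}\times S_{L})^{-n_{k}}(C'\times D')=(C'\cap S_{K}^{-n_{k}}C')\times(D'\cap S_{L}^{-n_{k}}D')$$
has product measure zero; moreover $S_{K}\times S_{L}$ is weakly mixing on $(H_{K}\times H_{L},m_{\sigma}\times m_{\tau})$ since a product of two weakly mixing systems is weakly mixing. The main obstacle is the geometric choice of $R$ and $R'$: once they are placed sufficiently far from the origin relative to their diameters and to $\delta$, the pointwise non-recurrence produced by the $\lambda_{0}^{n_{k}}$-twist translates into the quantitative measure estimate above, and the rest of the argument is a direct transcription of Proposition~\ref{prop1}.
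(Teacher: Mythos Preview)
Your proof is correct and follows essentially the same route as the paper's: choose a rectangle $R$ so that $\lambda_{0}^{n_{k}}R\cap R=\varnothing$ for all $k$, deduce $U\cap S_{K}^{-n_{k}}U\subseteq U\setminus T_{K}^{-n_{k}}U$, invoke Lemma~\ref{lem3} for the summable bound along $A$, trim via a Borel--Cantelli-type excision and then shrink away finitely many indices using ergodicity of the powers of $S_{K}$, and finish with the product argument. Your explicit annulus condition $\delta D>\mathrm{diam}(R)$ is a slightly more detailed justification of the disjointness step than the paper gives, but the underlying idea and the rest of the argument coincide.
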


\begin{proof}
We have proved in Lemma \ref{lem3} that given a non-zero vector $x_{0}\in H_{K}$ and $a<b$, $c<d$, the set
 $$U_{a,b}^{c,d}=\{x\in H_{K}
\textrm{ ; } \Re e \pss{x_{0}}{x}\in ]a,b[+i]c,d[\}$$
satisfies $$\sum_{k\in A}m_{\sigma}(U_{a,b}^{c,d}\triangle T_{K}^{n_{k}}U_{a,b}^{c,d})<+\infty.$$
Now take $0<a<b$ and $0<c<d$ such that for each $k\ge 1$, the subsets $R_{a,b}^{c,d}=]a,b[+i]c,d[$ and $\lambda_{0}^{n_{k}}R_{a,b}^{c,d}$
do not intersect. This is possible since $|\lambda_{0}^{n_{k}}-1|\ge \delta$ for each $k\ge 1$. For such a choice of $a,b,c$ and $d$, the series $$\sum_{k\in A}m_{\sigma}(U_{a,b}^{c,d}\cap S_{K}^{-n_{k}}U_{a,b}^{c,d})$$ is convergent.
Indeed, suppose that $x$ belongs to $U_{a,b}^{c,d}\cap S_{K}^{-n_{k}}U_{a,b}^{c,d}$. Then
$\pss{x_{0}}{\lambda_{0}^{n_{k}}T_{K}^{n_{k}}x}\in R_{a,b}^{c,d}$, i.e. $\lambda_{0}^{n_{k}}\pss{x_{0}}{T_{K}^{n_{k}}x} \in R_{a,b}^{c,d}$, i.e. $\pss{x_{0}}{T_{K}^{n_{k}}x} \in \lambda_{0}^{-n_{k}}\, R_{a,b}^{c,d}$. Since $R_{a,b}^{c,d}$ and $\lambda_{0}^{n_{k}}R_{a,b}^{c,d}$
do not intersect, it follows that $\pss{x_{0}}{T_{K}^{n_{k}}x}$ does not belong to $R_{a,b}^{c,d}$, i.e. that $x$ belongs to
$U_{a,b}^{c,d}\setminus T_{K}^{-n_{k}}U_{a,b}^{c,d}$. As $\sum_{k\in A}m_{\sigma}(U_{a,b}^{c,d}\triangle T_{K}^{-n_{k}}U_{a,b}^{c,d})$ is finite, we obtain that
$\sum_{k\in A}m_{\sigma}(U_{a,b}^{c,d}\cap S_{K}^{n_{k}}U_{a,b}^{c,d})$ is finite as well.
Setting
$$C=U_{a,b}^{c,d}\setminus \bigcup_{k\ge \kappa , k\in A}U_{a,b}^{c,d}\cap S_{K}^{n_{k}}U_{a,b}^{c,d},$$ we obtain if $\kappa $ is large enough that
$m_{\sigma  }(C)>0$. It is clear that $C\cap S_{K}^{n_{k}}C=\varnothing$ for all $k\in A$, $k\ge \kappa $. 
The same argument as in Proposition \ref{prop1} above shows that there exists a Borel subset $C'$ of 
$C$ such that $m_{\sigma  }(C')>0$ and $m_{\sigma  }(C'\cap S_{K}^{n_{k}}C')=0$ for all $k\in A$. In a similar fashion we obtain a Borel subset $D'$ of $H_{L}$ with $m_{\tau}(D')>0$ such that $m_{\tau }(D'\cap S_{L}^{n_{k}}D')=0$ for all $k\in B$, and we deduce from this that the set 
$C'\times D'$ is \nrect\ \wrt\ the set $\{n_{k}\}$ for the \wmx\ transformation
$T_{K}\times T_{L}$ of $(H_{K}\times H_{L}, \mathcal{B}_{K}\times \mathcal{B}_{L}, m_{\sigma  }\times m_{\tau})$. This finishes the proof of Proposition \ref{prop3}.
\end{proof}
Theorem \ref{th2} is proved.
We thus obtain a positive answer to Question \ref{q1} in the case where the set $\{n_{k}\}$ is such that $n_{k}|n_{k+1}$ for each $k\ge 0$.

\subsection{Some remarks}
The divisibility assumption on the $n_{k}$'s is used in the proof of Theorem \ref{th1} in two places: first, we need it in order to construct the two \mea s $\sigma  $ and $\tau$ with $|\hat{\sigma  }(n_{k})-1|\le a_{k}$ and $|\hat{\tau  }(n_{k})-1|\le b_{k}$ for each $k\ge 1$, and, second, we deduce from it that the sequence $(n_{k})_{k\ge 0}$ is lacunary. It is not difficult to see that the proof yields in fact the following more general result:

\begin{theorem}\label{cor1}
 Let $(n_{k})_{k\ge 0}$ be  a strictly increasing sequence having the following property: 
 \begin{itemize}
  \item [] for each sequence $(a_{k})_{k\ge 0}$ of positive numbers decreasing to zero, there exists a continuous \mea\ $\sigma  $ on the unit circle such that $|\hat{\sigma  }(n_{k})-1|\le a_{k}$ for all $k$. 
 \end{itemize}
Then for each $p\in\Z\setminus\{0\}$, the set $\{n_{k}-p \textrm{ ; } k\ge 0\}\cap\N$ is a \nrec\ set for some \wmx\ \lds. If moreover $\{n_{k}\}$ is \nrect\ \wrt\ some rotation on $\T$ (in particular if it is lacunary), then the set $\{n_{k}\}$ itself is a \nrec\ set for some \wmx\ \lds.
\end{theorem}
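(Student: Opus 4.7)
The plan is to revisit the proof of Theorem \ref{th2} and observe that the divisibility hypothesis $n_{k} \mid n_{k+1}$ enters in exactly two places, each of which is now explicitly covered by a hypothesis of Theorem \ref{cor1}. First, divisibility is used only through Lemma \ref{lem1} to produce continuous probability measures $\sigma, \tau$ on $\T$ with Fourier coefficients along $(n_{k})$ controlled from above by any prescribed sequence tending to zero; this is precisely the standing hypothesis of Theorem \ref{cor1}. Second, in the subcase $p=0$ of Theorem \ref{th2}, divisibility was invoked to deduce lacunarity, and from there to produce $\lambda_{0} \in \T$ with $\inf_{k}|\lambda_{0}^{n_{k}}-1| > 0$; this latter condition is exactly the \nrec\ of $\{n_{k}\}$ with respect to the rotation by $\lambda_{0}$, which is the additional hypothesis made in the second half of Theorem \ref{cor1}.

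For the first assertion (the case $p \ne 0$), I would apply Fact \ref{fact1} to obtain sequences $(a_{k}), (b_{k})$ decreasing to zero, divergent in sum but $\frac{1}{3}$-summable along complementary subsets $A$ and $B$ of $\N$. Using the hypothesis of Theorem \ref{cor1}, produce continuous measures $\sigma$ and $\tau$ on $\T$ with $|\hat{\sigma}(n_{k})-1| \le a_{k}$ and $|\hat{\tau}(n_{k})-1| \le b_{k}$. Form the Kalish-type \ops\ $T_{K}$ on $H_{K}$ and $T_{L}$ on $H_{L}$, where $K=\supp(\sigma)$ and $L=\supp(\tau)$, together with the canonical \ga\ \mea s $m_{\sigma}$ and $m_{\tau}$ from Section~$2.2$. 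Now the proofs of Lemma \ref{lem3} and Proposition \ref{prop1} transfer verbatim: they invoke only the identity (\ref{eq1}), the bound $\int_{\T} |\lambda^{n_{k}}-1|^{2}\,d\sigma(\lambda) \le 4\pi\,\Re e\,(1-\hat{\sigma}(n_{k}))$, the $\frac{1}{3}$-summability of $(a_{k})_{k\in A}$ (and of $(b_{k})_{k\in B}$), and the ergodicity of all powers of $T_{K}$ and $T_{L}$, which follows from their invertibility and weak mixing. None of these steps uses divisibility.

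For the second assertion, with the given $\lambda_{0} \in \T$ satisfying $\inf_{k}|\lambda_{0}^{n_{k}}-1| = \delta > 0$, I would build $T_{K}, T_{L}, m_{\sigma}, m_{\tau}$ as above and set $S_{K} = \lambda_{0} T_{K}$, $S_{L} = \lambda_{0} T_{L}$. Rotation invariance of \ga\ \mea s gives $S_{K}$-invariance of $m_{\sigma}$, and weak mixing of $S_{K}$ is inherited from $T_{K}$ because the integrals $\int \pss{x}{S_{K}^{n}z}\overline{\pss{y}{z}}\,dm_{\sigma}(z)$ and $\int \pss{x}{T_{K}^{n}z}\overline{\pss{y}{z}}\,dm_{\sigma}(z)$ have equal modulus. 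Then the argument of Proposition \ref{prop3} applies without change: choose $0<a<b$, $0<c<d$ so that the rectangle $R_{a,b}^{c,d}$ is disjoint from every rotate $\lambda_{0}^{n_{k}} R_{a,b}^{c,d}$, which is possible since $|\lambda_{0}^{n_{k}} - 1| \ge \delta$; observe that any $x \in U_{a,b}^{c,d} \cap S_{K}^{-n_{k}} U_{a,b}^{c,d}$ satisfies $\pss{x_{0}}{T_{K}^{n_{k}}x} \in \lambda_{0}^{-n_{k}} R_{a,b}^{c,d}$, hence lies in $U_{a,b}^{c,d} \setminus T_{K}^{-n_{k}} U_{a,b}^{c,d}$; and conclude by Lemma \ref{lem3} that these intersections are summable over $k \in A$. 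Symmetrizing over $A \cup B$ via the product $S_{K} \times S_{L}$, and passing to an ergodic subset as in Propositions \ref{prop1} and \ref{prop3}, produces a \nrect\ set of positive \mea\ for $\{n_{k}\}$.

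The only real work is bookkeeping verification: one must trace through the proofs of Propositions \ref{prop1} and \ref{prop3} line by line to confirm that no estimate secretly relies on the divisibility of consecutive $n_{k}$, but only on the decay $|\hat{\sigma}(n_{k}) - 1| \le a_{k}$, the $\frac{1}{3}$-summability along $A$, and the rotation-invariance and power-ergodicity of the Kalish-type \op. I expect no serious obstacle beyond this careful checking, since both appeals to divisibility in the proof of Theorem \ref{th2} are localized and neatly matched by the hypotheses of Theorem \ref{cor1}.
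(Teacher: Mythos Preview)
Your proposal is correct and matches the paper's own argument exactly: the paper does not give a separate proof of this theorem but simply remarks that the divisibility hypothesis in the proof of Theorem~\ref{th2} is used in precisely the two places you identify (to invoke Lemma~\ref{lem1} for the measures $\sigma,\tau$, and to deduce lacunarity for the case $p=0$), and that replacing these by the explicit hypotheses of Theorem~\ref{cor1} lets the argument go through verbatim. Your detailed tracing of Propositions~\ref{prop1} and~\ref{prop3} is exactly the ``bookkeeping verification'' the paper leaves implicit.
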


Of course, any sequence  $(n_{k})_{k\ge 0}$  satisfying the assumption ofTheorem \ref{cor1} is a \rs. It is natural to wonder whether it is true that any \rs\ satisfies this assumption. This is not the case, as  shown below.

\begin{example}\label{ex1}
 Let  $(q_{k})_{k\ge 0}$ be a sequence of integers tending to infinity. Consider the sequence  $(n_{k})_{k\ge 0}$  defined by $n_{0}=1$ and $n_{k+1}=q_{k}n_{k}+1$ for all $k\ge 0$. Since $\frac{n_{k+1}}{n_{k}}$ tends to infinity,  $(n_{k})_{k\ge 0}$  is a \rs. But if $\sigma  $ is a continuous \mea\ on $\T$ such that $\hat{\sigma  }(n_{k})$ tends to $1$, then $|\hat{\sigma  }(n_{k})-1|> \frac{1}{q_{k}^{{4}}}$ for infinitely many integers $k$. Indeed, suppose that $|\hat{\sigma  }(n_{k})-1|\le \frac{1}{q_{k}^{{4}}}$ for all $k$'s except finitely many. Then we have for all $k$ sufficiently large
 \begin{eqnarray*}
 \int_{\T}|\lambda -1|d\sigma  (\lambda )&=&\int_{\T}|\lambda ^{n_{k+1}}-\lambda ^{q_{k}n_{k}}|d\sigma  (\lambda )\\
 &\le&\left(\int_{\T}|\lambda ^{n_{k+1}}-1|^{2}d\sigma  (\lambda) \right)^{\frac{1}{2}}+q_{k}\left(\int_{\T}|\lambda ^{n_{k}}-1|^{2}d\sigma  (\lambda )\right)^{\frac{1}{2}}\\
 &\le& \left(2\Re e (1-\hat{\sigma  }(n_{k+1}))\right)^{\frac{1}{2}}+q_{k}\, \left(2\Re e (1-\hat{\sigma  }(n_{k}))\right)^{\frac{1}{2}}\\
 &\le&2\sqrt{2}\, (\frac{1}{q_{k+1}^{2}}+\frac{1}{q_{k}})\cdot
 \end{eqnarray*}
Letting $k$ go to infinity, we obtain that $\hat{\sigma  }(1)=1$, which is impossible since $\sigma  $ is supposed to be continuous. Hence $|\hat{\sigma  }(n_{k})-1|> \frac{1}{q_{k}^{{4}}}$ for infinitely many $k$'s. If $(q_{k})_{k\ge 0}$ goes to infinity extremely slowly, we thus see that any ``rigidity measure'' associated to the sequence $(n_{k})_{k\ge 0}$ has Fourier coefficients going to $1$ along some sub-sequence of the sequence $(n_{k})_{k\ge 0}$ slower than any prescribed rate.
\end{example}

\section{Proof of Theorem \ref{th3}}

Let us begin by recalling briefly here some of the results of \cite{GR}. If $r\ge 1$ is an integer, sets $\{n_{k}^{(r)}\}$  are constructed which have the property of  being \rect\ in the topological sense for all products of $r$ rotations on $\T$. This means for all $(\lambda_1,\ldots, \lambda_{r})\in\T^{r}$ and all $\varepsilon >0$, there exists an integer $k\ge 0$ such that $$\max_{i=1,\ldots, r}|\lambda_{i}^{n_{k}^{(r)}}-1|<\varepsilon.$$ But these sets are \nrect\ for some dynamical system on some compact space, namely for some suitable product of $2^{r-1}+1$ rotations on $\T$: there exist $\mu _{0}, \ldots, \mu _{2^{r-1}}\in\T$ and $\delta >0$ such that for all $k\ge 0$,
$$\min_{j=0,\ldots, 2^{r-1}}|\mu_{j}^{n_{k}^{(r)}}-1|>\delta .$$
The sets $\{n_{k}^{(r)}\}$ have the following form: $\{n_{k}^{(r)}\}=
\{n_{k,0}^{(r)}\}\cup\bigcup_{A\subseteq \{1,\ldots, r-1\}}\{n_{k,A}^{(r)}\}$, with 
$$\{n_{k,0}^{(r)}\}=\bigcup_{N\ge 1}\{H_{N}q+1 \textrm{ ; } 1\le q\le Q_{N}^{(r)}\}:=\bigcup_{N\ge 1}B_{N,0}^{(r)}$$
and
$$\{n_{k,A}^{(r)}\}=\bigcup_{N\ge 1}\{H_{N}\Delta_{N,A}^{(r)}(L_{N}j+1) \textrm{ ; } 1\le j\le \Theta_{N}^{(r)}\}:=\bigcup_{N\ge 1}B_{N,A}^{(r)}$$ for $A\subseteq \{1,\ldots, r-1\}$, $A\not =\varnothing$, and
$$\{n_{k,\varnothing}^{(r)}\}=\bigcup_{N\ge 1}\{H_{N}\Delta_{N,\varnothing}^{(r)}\}:=\bigcup_{N\ge 1}B_{N,\varnothing}^{(r)}.$$
Here $(L_{N})_{N\ge 1}$ is any rapidly increasing sequence of integers, the sequences $(\Delta_{N,A}^{(r)})_{N\ge 1}$, $A\subseteq \{1,\ldots, r-1\}$, $(\Theta _{N}^{(r)})_{N\ge 1}$ and $(Q_{N}^{(r)})_{N\ge 1}$ depend from the sequence $(L_{N})_{N\ge 1}$, and the sequence $(H_{N})_{N\ge 1}$ in an extremely rapidly increasing sequence of integers independent from all the other parameters: none of the sequences $(L_{N})_{N\ge 1}$, $(\Delta_{N,A}^{(r)})_{N\ge 1}$,  $(\Theta _{N}^{(r)})_{N\ge 1}$ or $(Q_{N}^{(r)})_{N\ge 1}$ depend from $(H_{N})_{N\ge 1}$. As explained in \cite{GR}, for a fixed $N\ge 1$ all the blocks 
$B_{N,0}^{(r)}$ and $B_{N,A}^{(r)}$ are necessarily intertwined, but for different $N$'s they are very far away one from another: if at step $N+1$ the integer $H_{N+1}$ is chosen extremely large \wrt\ all the quantities appearing at step $N$, any block
$B_{N+1,0}^{(r)}$ or $B_{N+1,A}^{(r)}$ is very far away from any block $B_{N,0}^{(r)}$ or $B_{N,A}^{(r)}$.
\par\smallskip
Our aim is now to show that if the parameters $L_{N}$ and $H_{N}$ are suitably chosen at each step $N$, the set $\{n_{k}^{(r)}\}$ (which is \rect\ for all products of $r$ rotations) is \nrect\ for some \wmx\ \lds\ on a Hilbert space. As we have seen in Subsections 4.1 and  4.2, any finite union of \nrec\ sets for \wmx\ \lds s is  again a \nrec\ set for some \wmx\ \lds. So it suffices to construct \ops\ $T_{0}$ and $T_{A}$, $A\subseteq \{1,\ldots, r-1\}$ on Hilbert spaces  such that
$\{n_{k,0}^{(r)}\}$ (resp. $\{n_{k,A}^{(r)}\}$) is a \nrec\ set for $T_{0}$ (resp $T_{A}$). But this follows easily from Theorem \ref{th1}.
The first ingredient is the following easy lemma:

\begin{lemma}\label{lem2}
Whatever the choice of $L_{N}$ at step $N$ and the values of $\Delta _{N,A}^{(r)}$ and $\Theta _{N}^{(r)}$, provided that the sequence $(H_{N})_{N\ge 1}$ grows sufficiently fast the sets
$$\bigcup_{N\ge 1}\{H_{N}q \textrm{ ; } 1\le q\le Q_{N}^{(r)}\}$$ and
$$\bigcup_{N\ge 1}\{H_{N}\Delta  _{N,A}^{(r)}L_{N}j \textrm{ ; } 1\le j\le \Theta _{N}^{(r)}\}, \quad A\subseteq \{1,\ldots, r-1\}, \; A\not =\varnothing
$$ 
and
$$\bigcup_{N\ge 1}\{H_{N}\Delta  _{N,\varnothing}^{(r)}\}$$
generate non-\js s.
\end{lemma}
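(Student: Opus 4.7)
The plan is to apply the characterization of Jamison sequences given in Theorem \ref{th000}: for each of the three sets, and every $\varepsilon>0$, I will exhibit some $\lambda\in\T\setminus\{1\}$ with $\sup_{k}|\lambda^{n_{k}}-1|<\varepsilon$, which establishes the non-Jamison property.

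Each of the three sets has the form $\bigcup_{N\ge 1}\{H_{N}\,m : m\in S_{N}\}$, where $S_{N}$ is a finite set of positive integers whose maximum $M_{N}$ depends only on the parameters $L_{N}$, $\Delta_{N,A}^{(r)}$, $\Theta_{N}^{(r)}$, $Q_{N}^{(r)}$, and is therefore determined \emph{before} $H_{N}$ is chosen. Explicitly, $M_{N}=Q_{N}^{(r)}$ in the first case, $M_{N}=\Delta_{N,A}^{(r)}L_{N}\Theta_{N}^{(r)}$ in the second, and $M_{N}=\Delta_{N,\varnothing}^{(r)}$ in the third.

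Fix $\varepsilon>0$ and set $\eta_{N}=\varepsilon/(2\pi M_{N})$. The main step is to construct by induction a nested sequence of closed intervals $I_{N}\subseteq(0,1)$ of positive length such that every $\alpha\in I_{N}$ satisfies $\|H_{N}\alpha\|\le\eta_{N}$, where $\|\cdot\|$ denotes distance to the nearest integer. Starting from any $I_{0}\subseteq(0,1)$, the set $\{\alpha\in I_{N-1}:\|H_{N}\alpha\|\le\eta_{N}\}$ contains an interval of length $2\eta_{N}/H_{N}$ whenever $H_{N}\,|I_{N-1}|\ge 1$, since the preimages of $[-\eta_{N},\eta_{N}]\bmod 1$ under $\alpha\mapsto H_{N}\alpha$ consist of intervals of length $2\eta_{N}/H_{N}$ regularly spaced by $1/H_{N}$. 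Since $|I_{N-1}|$ is fixed at step $N-1$, taking $H_{N}$ large enough produces such an $I_{N}$, which I shrink (say to length $\eta_{N}/H_{N}$) to keep the induction running at step $N+1$. Let $\alpha=\bigcap_{N}I_{N}\in(0,1)$.

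Then $\lambda=e^{2\pi i\alpha}\ne 1$, and writing $H_{N}\alpha=p_{N}+\theta_{N}$ with $|\theta_{N}|\le\eta_{N}$ yields, for every $N$ and every $m\in S_{N}$,
\[
|\lambda^{H_{N}m}-1|=|e^{2\pi i m\theta_{N}}-1|\le 2\pi\, m\, |\theta_{N}|\le 2\pi M_{N}\eta_{N}=\varepsilon,
\]
so the three sequences are non-Jamison by Theorem \ref{th000}. A single choice of $(H_{N})$ handles all three sets simultaneously, by taking at each step $N$ the maximum of the three growth requirements (each of which depends only on data fixed beforehand). The only mild technical point is verifying this separation of dependencies, which is exactly the hypothesis that none of $L_{N},\Delta_{N,A}^{(r)},\Theta_{N}^{(r)},Q_{N}^{(r)}$ depends on $(H_{N})$; everything else is a standard nested-interval Diophantine construction.
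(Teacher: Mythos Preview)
Your argument has a genuine gap: the growth condition you impose on $(H_{N})$ depends on $\varepsilon$. You fix $\varepsilon$, set $\eta_{N}=\varepsilon/(2\pi M_{N})$, and then require $H_{N}\,|I_{N-1}|\ge 1$ with $|I_{N-1}|=\eta_{N-1}/H_{N-1}$; this amounts to $H_{N}/H_{N-1}\ge 2\pi M_{N-1}/\varepsilon$. But the non-Jamison criterion of Theorem~\ref{th000} asks that a \emph{single} sequence $(H_{N})$ work for every $\varepsilon>0$, and your lower bound on $H_{N}/H_{N-1}$ blows up as $\varepsilon\to 0$. So as written, your construction produces a different $(H_{N})$ for each $\varepsilon$, which does not establish the lemma. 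Your closing remark that the growth requirement ``depends only on data fixed beforehand'' overlooks that $|I_{N-1}|$ itself carries the factor $\varepsilon$.

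The paper's proof sidesteps this as follows. It quotes from \cite{GR} a perfect set $K\subseteq\T$ with $1\in K$ such that $|\lambda^{H_{N}}-1|\le M\,H_{N}/H_{N+1}$ for every $\lambda\in K$ and every $N$; the growth condition becomes $H_{N+1}>M\,H_{N}M_{N}\,2^{N}$, which is independent of $\varepsilon$. Given $\varepsilon>0$, one first picks $N_{0}$ with $2^{-N_{0}}<\varepsilon$, which handles all $N\ge N_{0}$ uniformly over $K$; the finitely many remaining $N<N_{0}$ are then taken care of by choosing $\lambda\in K\setminus\{1\}$ close enough to $1$, which is possible precisely because $K$ is perfect at $1$. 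Your nested-interval scheme can be repaired along the same lines --- replace $\eta_{N}$ by, say, $2^{-N}/(2\pi M_{N})$ so that the construction and the choice of $(H_{N})$ no longer see $\varepsilon$, and for each $\varepsilon$ start the nesting at a late stage $N_{0}$ inside a short interval $(0,\delta)$ so that smallness of $\alpha$ controls the early terms --- but this extra ingredient is exactly what is missing from your write-up.
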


\begin{proof}[Proof of Lemma \ref{lem2}]
By Lemma 2.4 of \cite{GR}, there exists a perfect subset $K$ of $\T$, with $1\in K$, such that for each $\lambda \in K$ and each $N\ge 1$,
$|\lambda ^{H_{N}}-1|\le M\,\frac{H_{N}}{H_{N+1}}$, where $M$ is a numerical constant. Hence 
$$|\lambda ^{H_{N}q}-1|\le M\,\frac{H_{N}Q_{N}^{(r)}}{H_{N+1}}<2^{-N}\quad \textrm{ for all } q \textrm{ with }
1\le q\le Q_{N}^{(r)}
$$ if $H_{N+1}$ is sufficiently large. This holds true for all $\lambda \in K$ and $N\ge 1$. Let now $\varepsilon >0$. If $N_{0}$ is such that $2^{-N_{0}}<\varepsilon $, then $|\lambda ^{H_{N}q}-1|<\varepsilon $ for all $\lambda \in K$, $N\ge N_{0}$ and $1\le q\le Q_{N}^{(r)}$.  Since the set $K$ is perfect and contains the point $1$, one can find $\lambda \in K$ as close to $1$ as we wish, but not equal to $1$. Hence there exists a $\lambda \in K$, $\lambda \not =1$, such that 
$|\lambda ^{H_{N}q}-1|<\varepsilon $ for all $N\ge 1$ and $1\le q\le Q_{N}^{(r)}$. This shows that the set 
$\bigcup_{N\ge 1}\{H_{N}q \textrm{ ; } 1\le q\le Q_{N}^{(r)}\}$ generates a \js. The proof is exactly the same for the other sets, using that $H_{N+1}$ can be chosen much larger than $H_{N}\Delta  _{N,A}^{(r)}L_{N}\Theta _{N}^{(r)}$ for each  subset $A$ of $\{1,\ldots, r-1\}$.
\end{proof}

The second fact we need is that if the sequences  $(L_{N})_{N\ge 1}$
 and $(H_{N})_{N\ge 1}$ grows sufficiently  fast, there exist elements $\lambda _{0}$ and $\lambda _{A}$ of the unit circle, $A\subseteq \{1,\ldots, r-1\}$, such that $$|\lambda _{0}^{n_{k,0}^{(r)}}-1|>\frac{1}{2} \quad \textrm{and}\quad 
 |\lambda _{A}^{n_{k,A}^{(r)}}-1|>\frac{1}{2}$$
 for each $k$. See Proposition 4.5 of \cite{GR} for the proof, which is very similar to the proof of Lemma \ref{lem2} above. Hence we can apply Theorem \ref{th1}, and we obtain that the sets $\{n_{k,0}^{(r)}\}$ and 
 $\{n_{k,A}^{(r)}\}$ are  \nrec\ sets for some \wmx\ \lds s $T_{0}$ and $T_{A}$, respectively, acting on a Hilbert space. This finishes the proof of Theorem \ref{th3}.

\section{Some further \nrect\ lacunary sets}

In this subsection we expand on a result of \cite{BDLR}, where the authors
show that the Chacon transformation is \nrect\ \wrt\ a certain lacunary set. Our Theorem \ref{prop2} is a mild generalization of this, and is inspired by the proof of Proposition 3.10 in \cite{BDLR}.

\begin{theorem}\label{prop2}
 For each strictly increasing sequence $(n_{k})_{k\ge 0}$ of integers, write $n_{k+1}$ as $n_{k+1}=p_{k}n_{k}+r_{k}$, with $p_{k}$ and $r_{k}$ nonnegative integers. If 
 $p_{k}\ge 3$ for each $k$ and if the series $\sum_{k\ge 1}\frac{r_{k}}{p_{k}n_{k}}$ is convergent, then the set $\{n_{k}-1 \textrm{ ; } k\ge 1\}$ is a \nrec\ set for some \wmx\ transformation.
\end{theorem}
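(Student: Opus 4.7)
My approach is a rank-one cutting-and-stacking construction, directly modelled on the Chacon transformation, in which the successive column heights are exactly $n_{1},n_{2},n_{3},\dots$ . At stage $k$ we take a Rokhlin column $C_{k}$ of height $n_{k}$ with base $B_{k}$; to pass to stage $k+1$ we cut $C_{k}$ into $p_{k}$ equal subcolumns $C_{k}^{(0)},\dots,C_{k}^{(p_{k}-1)}$, add $s_{i}^{(k)}\ge 0$ spacer levels above the $i$-th subcolumn with $s_{0}^{(k)}+\dots+s_{p_{k}-1}^{(k)}=r_{k}$, and restack. The resulting column $C_{k+1}$ has height $p_{k}n_{k}+r_{k}=n_{k+1}$, in agreement with the decomposition of the hypothesis. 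The hypothesis $p_{k}\ge 3$ gives enough room to distribute the $r_{k}$ spacers non-trivially among the tops of the subcolumns, which I will exploit both for weak mixing and for non-recurrence.

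My first task is to check that the construction yields a finite-measure system. The total measure of the spacers inserted at stage $k$ equals $r_{k}\mu(B_{k})/p_{k}$. Since $\mu(B_{k})=\mu(B_{1})/(p_{1}\cdots p_{k-1})$ and, because $r_{j}\ge 0$, $n_{k}\ge n_{1}p_{1}\cdots p_{k-1}$, this spacer mass is bounded up to a multiplicative constant by $r_{k}/(p_{k}n_{k})$. Summability of the latter series therefore yields a well-defined probability-preserving transformation after normalization. The second task is weak mixing: because $p_{k}\ge 3$, I can arrange the $s_{i}^{(k)}$ to be non-constant in $i$ (for example by putting at least one spacer on top of the last subcolumn and at least one spacer on top of some intermediate subcolumn, whenever $r_{k}\ge 2$, and a Chacon-style fallback when $r_{k}$ is small), so that the standard rank-one weak mixing criterion applies. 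The third task, which is the heart of the proof, is non-recurrence of $\{n_{k}-1\}$.

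For non-recurrence I fix $k_{0}$ sufficiently large and take $A=B_{k_{0}}$. At every stage $k\ge k_{0}$, $A$ occupies a set of levels $J_{k}\subset\{0,1,\dots,n_{k}-1\}$ of the stage-$k$ column, determined recursively by $J_{k+1}=\bigcup_{i=0}^{p_{k}-1}\bigl(h_{i}^{(k+1)}+J_{k}\bigr)$, where $h_{i}^{(k+1)}=in_{k}+s_{0}^{(k)}+\dots+s_{i-1}^{(k)}$ is the height in $C_{k+1}$ of the base of the $i$-th subcolumn. A direct calculation shows that for a point $x$ sitting at level $h_{i}^{(k+1)}+j$ of $C_{k+1}$ (with $i\le p_{k}-1$ and $j\in J_{k}$), the image $T^{n_{k}-1}x$ lies either in the spacer block above the $i$-th subcolumn (when $1\le j\le s_{i}^{(k)}$), or at level $h_{i+1}^{(k+1)}+(j-1-s_{i}^{(k)})$ of the $(i+1)$-th subcolumn (when $j>s_{i}^{(k)}$), or has escaped past the top of $C_{k+1}$ (when $i=p_{k}-1$ and $j\ge 1$). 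The spacer distributions will be chosen so that, at every stage, the arithmetic condition ``$j-1-s_{i}^{(k)}\in J_{k}$'' fails for all admissible pairs $(i,j)$, while the escaped case is controlled by the next stage. Using $p_{k}\ge 3$ I have enough slack to include $s_{p_{k}-1}^{(k)}\ge 1$, forcing the top level of every $C_{k+1}$ to be a spacer and hence to lie outside every $J_{\ell}$.

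The main obstacle is the inductive verification that the spacer distribution can indeed be chosen, at every stage, to satisfy simultaneously the weak mixing genericity condition and the combinatorial non-intersection condition on the translated sets $J_{k}$. The sets $J_{k}$ grow geometrically in cardinality, so one has to argue that the shifts $1+s_{i}^{(k)}$ avoid all the differences appearing in $J_{k}$; this is doable because at each stage there are at most $|J_{k}|^{2}$ forbidden values for the $s_{i}^{(k)}$'s while the freedom in distributing the $r_{k}$ spacers among the $p_{k}-1\ge 2$ intermediate subcolumns grows at the right rate (again, this is where $p_{k}\ge 3$ is used). The summability assumption $\sum r_{k}/(p_{k}n_{k})<\infty$ is used only to make the construction live on a probability space; the non-recurrence argument itself is purely combinatorial, following very closely the scheme of the Chacon proof of \cite{BDLR}, Proposition~3.10, which corresponds to the special case $p_{k}=3,r_{k}=1$.
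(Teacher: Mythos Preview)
Your overall framework---rank-one cutting-and-stacking with column heights $n_k$---is exactly the paper's, but two points in your plan are genuine gaps rather than routine details. First, the combinatorial avoidance claim is mis-justified: each $s_i^{(k)}$ lies in $\{0,\dots,r_k\}$, so when $r_k$ stays bounded (e.g.\ the Chacon case $r_k\equiv 1$) your ``freedom'' does not grow at all, whereas $|J_k|^2$ grows geometrically. The real reason the condition $1+s_i^{(k)}\notin J_k-J_k$ can be met is different: the minimal positive element of $J_k-J_k$ is at least $n_{k_0}$ (an easy induction), so any \emph{small} interior spacer value works automatically---but you never make this observation. The paper sidesteps the whole discussion by taking $A$ to be the first added \emph{spacer} rather than a column base: then the key geometric fact is simply that the level (and the two levels) just below any copy of $A$ are never copies of $A$, so $T^{n_k-1}$ cannot send $A$ into $A$ except on the last subtower, which is excised using the convergence of $\sum 1/(p_kn_k)$.

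Second, your argument requires $s_{p_k-1}^{(k)}\ge1$ and a non-constant spacer pattern, both of which silently assume $r_k\ge1$. If $r_k=0$ for all large $k$ you are forced to put $s_i^{(k)}\equiv0$, and the resulting rank-one map has discrete spectrum, hence is not weakly mixing; your ``Chacon-style fallback'' cannot cover this since there are no spacers to distribute. The paper disposes of this case at the outset by observing that it means $n_k\mid n_{k+1}$ eventually and invoking Theorem~\ref{th2}. For the remaining case ($r_k\ge 1$ infinitely often) the paper does not appeal to an abstract weak-mixing criterion but fixes one concrete pattern---a single spacer after subtower $a_k=\lfloor p_k/3\rfloor$, the remaining $r_k-1$ on top---and verifies weak mixing directly from the eigenvalue equation; the placement $a_k\approx p_k/3$ is precisely what guarantees that both comparison sets in that argument have measure at least $1/4$.
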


We can retrieve from this the examples of \cite{BDLR}:
we can always write $n_{k+1}$ as $n_{k+1}=p_{k}n_{k}+r_{k}$ with $0\le r_{k}<n_{k}$. If the series $\sum_{k\ge 1}\frac{n_{k}}{n_{k+1}}$ is convergent, then $\sum_{k\ge 1}\frac{r_{k}}{p_{k}n_{k}}$ is obviously convergent as well, and so the set $\{n_{k}-1\}$ is a \nrec\ set for some \wmx\ transformation. And if $(n_{k})$ is the sequence
defined by $n_{0}=1$ and $n_{k+1}=3n_{k}+1$ for each $k\ge 0$, then $n_{k}=1+3+\ldots +3^{k}$ for $k\ge 1$. Since the series $\sum_{k\ge 1}\frac{r_{k}}{p_{k}n_{k}}$ is convergent, the set $\{n_{k}-1\}=\{\frac{3^{k+1}-1}{2}-1\}$ is a
\nrec\ set for a certain \wmx\ rank-one transformation (and the proof of Theorem \ref{prop2} shows that this is the Chacon transformation).

\begin{proof}
 Observe first that if there exists a $k_{0}$ such that $r_{k}=0$ for all $k\ge k_{0}$, the conclusion of Theorem \ref{prop2} follows from Theorem \ref{th2}, since in this case $n_{k}$ divides $n_{k+1}$ for each $k\ge k_{0}$. Hence we can suppose without loss of generality that $r_{k}\ge 1$ for infinitely many $k$'s.
 The proof uses the standard cutting and stacking method. Let us denote by $\mathcal{I}_{k}$ the tower of height $h_{k}$ which we have at step $k$ of the construction. At step $k+1$, we cut this tower into $p_{k}$ subtowers $\mathcal{I}_{k,1}, \ldots, \mathcal{I}_{k,p_{k}}$ having a basis which is an interval of length $\frac{1}{p_{k}}$ times the length of the basis of $\mathcal{I}_{k}$. If $r_{k}\ge 1$, set $a_{k}=\lfloor \frac{p_{k}}{3}\rfloor$, and stack on top of each other, and in this order, the towers $\mathcal{I}_{k,1},\ldots, \mathcal{I}_{k,a_{k}}$, one spacer, $\mathcal{I}_{k,a_{k}+1}, \ldots, \mathcal{I}_{k,p_{k}}$, and then $r_{k}-1$ spacers. If $r_{k}=0$, we simply stack the $p_{k}$ towers  $\mathcal{I}_{k,1},\ldots, \mathcal{I}_{k,p_{k}}$ on top of each other.
 So if we start with a tower $\mathcal{I}_{1}$ of height $n_{1}\ge 3$ with a basis which is an interval of length $l_{1}$, the $k^{th}$ tower $\mathcal{I}_{k}$ has height $n_{k}$ and a basis which is an interval of length $l_{k}=\frac{l_{k-1}}{p_{k-1}}$. Since the series $\sum_{k\ge 1}\frac{r_{k}}{p_{k}n_{k}}$ is convergent, with a suitable choice of $l_{1}$ we can ensure that this defines a \mpt\ $T$ of the interval $[0,1]$ (\wrt\ the Lebesgue \mea). It is clear that $T$ is \erg, and the usual argument shows that $T$ is \wmx: suppose that $f\in L^{2}([0,1])$ is an eigenfunction of the Koopman \op\ $U_{T}$, associated to an \eva\ $\lambda \in \T$: $f(Tx)=\lambda f(x)$ a.e. on $[0,1]$, hence for almost every $x\in [0,1]$ we have for each $n\ge 1$ $f(T^{n}x)=\lambda ^{n}f(x)$. As $T$ is ergodic, if $f$ is non-zero we can suppose without loss of generality that $|f|=1$ a.e.. Fix $\varepsilon >0$. Since $r_{k}\ge 1$ for infinitely many $k$'s, we can find an integer $k$ such that $r_{k}\ge 1$ and a function $g\in L^{2}([0,1])$ with the following two properties:  $||f-g||_{2}<\varepsilon $, and $g$ is constant on each level of the tower $\mathcal{I}_{k}$. If  $\tau$ denotes one of the levels of the towers $\mathcal{I}_{k,1},\ldots, \mathcal{I}_{k,a_{k}-1}$ (appearing at the bottom of $\mathcal{I}_{k+1}$), one sees easily that $g$ has the same value on the level  $\tau$ and on the level  $T^{a_{k}n_{k}+1}\tau$. So if $E$ denotes the set which is the union of all these levels  $\tau$, we have
 $$\left(\int_{E} |f(T^{a_{k}n_{k}+1}x)-f(x)|^{2}dx\right)^{\frac{1}{2}}\le 2\varepsilon .$$ Hence
 $$|\lambda ^{a_{k}n_{k}+1}-1|\left(\int_{E}|f(x)|^{2}\right)^{\frac{1}{2}}\le 2\varepsilon .$$ Since $|f|=1$ a.e., we get that  $|\lambda ^{a_{k}n_{k}+1}-1| \sqrt{m(E)}\le 2\varepsilon $. Now, as the measure of $E$ is bigger than $\frac{1}{4}$, this yields that $|\lambda ^{a_{k}n_{k}+1}-1|\le 8\varepsilon $. If $\tau'$ denotes now one of the levels of the towers $\mathcal{I}_{k,a_{k}+1}, \ldots, \mathcal{I}_{k,2a_{k}}$ (appearing just after the first added spacer in the tower $\mathcal{I}_{k+1}$), the same argument shows that $g$ has the same value on the level $\tau'$ and on the level $T^{a_{k}n_{k}}\tau'$, and so if $E'$ denote the union of all these levels $\tau'$ we get that $|\lambda ^{a_{k}n_{k}}-1|\sqrt{m(E')}\le 2\varepsilon $, from which it follows that $|\lambda ^{a_{k}n_{k}}-1|\le 8\varepsilon $. Hence $|\lambda -1|\le 16\varepsilon $ for each $\varepsilon >0$, so $\lambda =1$ and $T$ is \wmx.
 \par\smallskip
 Let us now prove that $\{n_{k}-1\}$  is a \nrec\ set for $T$. Let $A$ denote the first added spacer (in the construction of $\mathcal{I}_{2}$). It is contained in any of the towers $\mathcal{I}_{k}$, $k\ge 3$.
 To visualize the action of $T^{n_{k}-1}$ on $A$, suppose that this spacer $A$ is painted red. Let $\tau $ be a red level of the tower $\mathcal{I}_{k+1}$: if $\tau$ is a level of one of the sub-towers $\mathcal{I}_{k,j}$, $j\in\{1,\ldots, a_{k}-1, a_{k}+1,\ldots, p_{k}-1\}$, then $T^{n_{k}}\tau$ is also a red level, and so $T^{n_{k}-1}\tau$ cannot be red. If $\tau$ is a red level of $\mathcal{I}_{k,a_{k}}$, $T^{n_{k}+1}$ maps $\tau $ on a red level, and the two levels below are not red, so $T^{n_{k}-1}\tau$ cannot be red either. Lastly, we have to consider the case where $\tau$ is a red level of $\mathcal{I}_{k,p_{k}}$: here in general $T^{n_{k}-1}$ does not map $\tau$ onto a level of $\mathcal{I}_{k+1}$, and one has to go over to the towers $\mathcal{I}_{k+2}$, $\mathcal{I}_{k+3}$, etc. in order to see the action of $T^{n_{k}-1}$ on $\tau$. Here is may happen that $T^{n_{k}-1} $ maps some piece of $\tau$ on some red level of a tower $\mathcal{I}_{k+2}$, $\mathcal{I}_{k+3}$, etc. 
 What we eventually get is that for each $k\ge 3$, $T^{n_{k}-1}(A\setminus \mathcal{I}_{k,p_{k}})\cap A=\varnothing$. Set
 $$C=A\setminus \bigcup_{k\ge \kappa }\mathcal{I}_{k,p_{k}},$$ where $\kappa $ is a sufficiently large integer. For each $k\ge 3$, the measure of $\mathcal{I}_{k,p_{k}}$ is less than $\frac{1}{p_{k}n_{k}}$ and, since the series $\sum_{k}\frac{1}{p_{k}n_{k}}$ is convergent, the set $C$ has positive measure if $\kappa $ is large enough. Moreover, $T^{n_{k}-1}C$ is contained in $T^{n_{k}-1}(A\setminus \mathcal{I}_{k,p_{k}})$ for each $k\ge \kappa $, and the set $T^{n_{k}-1}(A\setminus \mathcal{I}_{k,p_{k}})$ does not intersect $A$. Hence it does not intersect $C$ either, and $T^{n_{k}-1}C\cap C=\varnothing$ for all $k\ge \kappa $.
Then the same argument as in the proof of Proposition \ref{prop1} shows that there exists a Borel subset $C'$ of $C$ of positive \mea\ with the property that $m(T^{n_{k}-1}C'\cap C')=0$ for each $k\ge 1$. This proves that $\{n_{k}-1\}$ is a \nrec\ set for $T$.
\end{proof}

As a corollary to Theorem \ref{prop2} we obtain:

\begin{corollary}\label{cor2}
 Let $(n_{k})_{k\ge 0}$ be a  strictly increasing sequence  of integers such that, if we write $n_{k+1}$ as $n_{k+1}=p_{k}n_{k}+r_{k}$, $p_{k}\ge 3$ and $0\le r_{k}<n_{k}$, the series $\sum_{k\ge 1}\frac{r_{k}}{p_{k}n_{k}}$ is convergent.
 Then for each $p\ge 1$
 the set $\{n_{k}-p\}\cap\N$ is a \nrec\ set for some \wmx\ \ds.
\end{corollary}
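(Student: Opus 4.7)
I would mimic the cutting-and-stacking construction from the proof of Theorem \ref{prop2} and verify that the same non-recurrence analysis applies to $T^{n_k - p}$ in place of $T^{n_k - 1}$ for any fixed $p \ge 1$. The case $p = 1$ is Theorem \ref{prop2} itself, so I focus on $p \ge 2$. Crucially, the construction of $T$ and the verification of its weak mixing are independent of $p$, so the same transformation works for every $p$.

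The transformation $T$ is built exactly as in the proof of Theorem \ref{prop2}: at step $k + 1$, cut $\mathcal{I}_k$ into $p_k$ subtowers, insert one spacer between $\mathcal{I}_{k, a_k}$ and $\mathcal{I}_{k, a_k + 1}$ when $r_k \ge 1$, and place $r_k - 1$ additional spacers on top. The convergence of $\sum r_k/(p_k n_k)$ makes $T$ a well-defined \mpt\ on a probability space, and the argument of Theorem \ref{prop2} shows that $T$ is \wmx\ (that part of the proof is unchanged).

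Next I would re-examine the action of $T^{n_k - p}$ on the copies of the first spacer $A$ (the ``red levels'') in $\mathcal{I}_{k+1}$. These occur as one red level $R_j$ per subtower $\mathcal{I}_{k, j}$, with consecutive red levels separated by $n_k$ except across the middle spacer, where the gap is $n_k + 1$. For any $k$ with $n_k > p$, the shift $n_k - p$ is strictly positive and strictly less than every inter-red-level gap; hence for $j \ne p_k$ the image $T^{n_k - p} R_j$ lies strictly between two consecutive red levels and is not red (more precisely, it differs from the nearest red level by $p$, $p + 1$, or $n_k - p$, depending on whether the shift crosses the middle spacer or stays within a single subtower). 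The only remaining case is $j = p_k$, where $T^{n_k-p}R_j$ may leave $\mathcal{I}_{k+1}$ through the top; this is handled exactly as in Theorem \ref{prop2} by excising $\mathcal{I}_{k, p_k}$ from $A$. Thus $T^{n_k - p}(A \setminus \mathcal{I}_{k, p_k}) \cap A = \varnothing$ for every $k$ with $n_k > p$.

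Setting $C = A \setminus \bigcup_{k \ge \kappa} \mathcal{I}_{k, p_k}$ with $\kappa$ large enough so that $n_\kappa > p$, the summability argument of Theorem \ref{prop2} (based on $\sum 1/(p_k n_k) < +\infty$) gives $m(C) > 0$, and consequently $T^{n_k - p} C \cap C = \varnothing$ for every $k \ge \kappa$. The finitely many $k < \kappa$ with $n_k - p \in \N$ are absorbed by successively shrinking $C$ using the ergodicity of each individual power $T^{n_k - p}$ (a consequence of weak mixing), yielding a Borel subset $C' \subseteq C$ of positive measure with $m(T^{n_k - p} C' \cap C') = 0$ for every $k$ such that $n_k - p \in \N$. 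This proves that $\{n_k - p\} \cap \N$ is a \nrec\ set for the \wmx\ transformation $T$. The main (but mild) obstacle is extending the red-level case analysis from $p = 1$ to general $p$; it remains straightforward once one observes that the shift $n_k - p$ always lies strictly between $0$ and the smallest inter-red-level gap.
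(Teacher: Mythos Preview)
Your route differs from the paper's. The paper does not reuse the transformation $T$ built from $(n_k)$; instead it applies Theorem~\ref{prop2} to the shifted sequence $m_k=n_k-(p-1)$, checking that one may write $m_{k+1}=p_k m_k+r'_k$ with $r'_k=r_k+(p_k-1)(p-1)\ge 0$ and that $\sum r'_k/(p_k m_k)<\infty$ (the extra contribution $\sum (p-1)/m_k$ converges because $n_{k+1}\ge 3n_k$ forces $\sum 1/n_k<\infty$). Theorem~\ref{prop2} then yields a \wmx\ transformation $T_p$, depending on $p$, for which $\{m_k-1\}=\{n_k-p\}$ is non-recurrent.

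Your attempt to use the same $T$ for every $p$ contains a genuine error. The claim that the red levels ``occur as one red level $R_j$ per subtower $\mathcal{I}_{k,j}$, with consecutive red levels separated by $n_k$'' is correct only for $k=2$. For $k\ge 3$ each copy $\mathcal{I}_{k,j}$ of $\mathcal{I}_k$ already contains many red levels (one for every earlier appearance of the spacer $A$), and the minimum distance between two consecutive red levels in $\mathcal{I}_{k+1}$ stays bounded (at most $n_2+1$) while $n_k-p\to\infty$. Hence your assertion that ``the shift $n_k-p$ is strictly less than every inter-red-level gap'' fails for all large $k$. Concretely, take $p=n_2$: the red heights in $\mathcal{I}_3$ include both $h_0$ and $h_0+n_2$, and such pairs persist in every $\mathcal{I}_k$ with $k\ge 3$; one then sees that for each large $k$ a fixed positive proportion of the red levels $\tau$ in $\mathcal{I}_{k+1}$ satisfies $T^{n_k-p}\tau\subseteq A$, so $m(T^{n_k-p}A\cap A)$ is bounded away from $0$ and no summable excision of the type you propose rescues the argument. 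The paper's proof of Theorem~\ref{prop2} works for $p=1$ precisely because no two red heights in any $\mathcal{I}_k$ are adjacent; this special feature is lost as soon as $p$ lies in the difference set of the red heights. The fix is exactly the paper's: rebuild the transformation from the shifted sequence so that the relevant shift is again of the form $m_k-1$.
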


\begin{proof}
For each $k\ge 1$ we have $n_{k+1}-p+1=p_{k}(n_{k}-p+1)+r_{k}+p_{k}(p-1)$, with $r_{k}+p_{k}(p-1)\ge 0$. Now, since $n_{k+1}\ge 3n_{k}$ for each $k$, the series $\sum \frac{1}{n_{k}}$ is convergent, and it follows that the series
$$\sum_{k\ge 0}\frac{r_{k}+p_{k}(p-1)}{p_{k}(n_{k}-p+1)}$$ is convergent. Applying Theorem \ref{prop2}, we can construct a \wmx\ system for which the set $\{n_{k}-p\}\cap\N$ is \nrect.
\end{proof}

It would be interesting to know whether Theorem \ref{prop2} and Corollary \ref{cor2} remain true when $r_{k}$ is not supposed to be nonnegative, but when one supposes only that the series $\sum_{k\ge 0}\frac{|r_{k}|}{p_{k}n_{k}}$ is convergent.

\end{document}